\theoremstyle{plain}
\newtheorem*{corollary}{Corollary}
\newtheorem{lemma}{Lemma}
\newtheorem{theorem}{Theorem}
\newtheorem*{conjecture}{Conjecture}
\theoremstyle{remark}
\newtheorem*{remark}{Remark}
\theoremstyle{definition}
\newtheorem{example}{Example}
\DeclareMathOperator{\Id}{Id}
\DeclareMathOperator{\id}{id}
\DeclareMathOperator{\End}{End}
\DeclareMathOperator{\Aut}{Aut}
\DeclareMathOperator{\length}{length}
\DeclareMathOperator{\sign}{sign}
\DeclareMathOperator{\Alt}{Alt}
\DeclareMathOperator{\Hom}{Hom}
\DeclareMathOperator{\PIexp}{PIexp}
\begin{document}

\title[Asymptotics of $H$-identities]{Asymptotics of $H$-identities for associative algebras with an $H$-invariant radical}

\author{A.\,S.~Gordienko}
\address{Vrije Universiteit Brussel, Belgium}
\email{alexey.gordienko@vub.ac.be} 

\keywords{Associative algebra, polynomial identity, derivation, group action, Hopf algebra, $H$-module algebra, codimension, cocharacter, Young diagram.}

\begin{abstract}
We prove the existence of the Hopf PI-exponent
 for finite dimensional associative algebras $A$
 with a generalized Hopf action of an associative algebra $H$ with $1$ over an algebraically closed field of characteristic $0$ assuming only the invariance of the Jacobson radical $J(A)$ under the $H$-action
 and the existence of the decomposition of $A/J(A)$ into the sum of $H$-simple algebras.
As a consequence, we show that the analog of Amitsur's conjecture holds for $G$-codimensions of finite dimensional associative algebras over a field of characteristic $0$ with an action of an arbitrary group $G$ by automorphisms and anti-automorphisms and for differential codimensions of finite dimensional
associative algebras with an action of an arbitrary Lie algebra by derivations.
\end{abstract}

\subjclass[2010]{Primary 16R10; Secondary 16R50, 16W20, 16W22, 16W25, 16T05, 20C30.}

\thanks{Supported by Fonds Wetenschappelijk Onderzoek~--- Vlaanderen Pegasus Marie Curie post doctoral fellowship (Belgium) and RFBR grant 13-01-00234a (Russia).}

\maketitle

\section{Introduction}
Amitsur's conjecture on asymptotic behaviour of codimensions of ordinary polynomial
identities was proved by A.~Giambruno and M.\,V.~Zaicev~\cite[Theorem~6.5.2]{ZaiGia} in 1999.

When an algebra is endowed with a grading, an action of a group $G$ by automorphisms and anti-automorphisms, an action of a Lie algebra by derivations or a structure of an $H$-module algebra for some Hopf algebra $H$,
it is natural to consider, respectively, graded, $G$-, differential or $H$-identities~\cite{BahtGiaZai, BahtZaiGradedExp, BereleHopf, Kharchenko}.
The analog of Amitsur's conjecture for finite dimensional associative algebras with a $\mathbb Z_2$-action was proved by A. Giambruno and M.V. Zaicev~\cite[Theorem~10.8.4]{ZaiGia} in 1999.
 In 2010--2011, E.~Aljadeff, A.~Giambruno, and D.~La~Mattina~\cite{AljaGia, AljaGiaLa, GiaLa}
obtained the validity of the analog of Amitsur's conjecture for associative PI-algebras with an action of a finite Abelian group by automorphisms as a particular case of their result for graded algebras.

In 2012, the analog of the conjecture
was proved~\cite{ASGordienko3, GordienkoKochetov} for finite dimensional associative algebras with a rational action of a reductive affine algebraic group by automorphisms and anti-automorphisms,
with an action of a finite dimensional semisimple Lie algebra by derivations or an action
of a semisimple Hopf algebra. These results were obtained as a consequence of~\cite[Theorem~5]{ASGordienko3}
and~\cite[Theorem~6]{GordienkoKochetov}, where the authors considered finite dimensional associative algebras with a generalized Hopf action of an associative algebra $H$ with $1$.
In the proof, they required  the existence of an $H$-invariant Wedderburn~--- Mal'cev and Wedderburn~--- Artin decompositions. Here we remove the first restriction. This enables us to prove
the analog of Amitsur's conjecture for $G$-codimensions of finite dimensional associative algebras with an action of an arbitrary group $G$ by automorphisms and anti-automorphisms and for differential codimensions of finite dimensional associative algebras with an action of an arbitrary Lie algebra by derivations.

\section{Polynomial $H$-identities and their codimensions}\label{SectionH}

Let $H$ be a Hopf algebra over a field $F$. An algebra $A$
over $F$
is an \textit{$H$-module algebra}
or an \textit{algebra with an $H$-action},
if $A$ is endowed with a homomorphism $H \to \End_F(A)$ such that
$h(ab)=(h_{(1)}a)(h_{(2)}b)$
for all $h \in H$, $a,b \in A$. Here we use Sweedler's notation
$\Delta h = h_{(1)} \otimes h_{(2)}$ where $\Delta$ is the comultiplication
in $H$.

In order to embrace an action of a group by anti-automorphisms, we consider a
generalized Hopf action~\cite[Section~3]{BereleHopf}.

Let $H$ be an associative algebra with $1$ over $F$.
We say that an associative algebra $A$ is an algebra with a \textit{generalized $H$-action}
if $A$ is endowed with a homomorphism $H \to \End_F(A)$
and for every $h \in H$ there exist $k\in \mathbb N$ and $h'_i, h''_i, h'''_i, h''''_i \in H$, $1\leqslant i \leqslant k$,
such that 
\begin{equation}\label{EqGeneralizedHopf}
h(ab)=\sum_{i=1}^k\bigl((h'_i a)(h''_i b) + (h'''_i b)(h''''_i a)\bigr) \text{ for all } a,b \in A.
\end{equation}

Choose a basis $(\gamma_\beta)_{\beta \in \Lambda}$ in $H$ and
denote by $F \langle X | H \rangle$ 
the free associative algebra over $F$ with free formal
 generators $x_i^{\gamma_\beta}$, $\beta \in \Lambda$, $i \in \mathbb N$.
 Let $x_i^h := \sum_{\beta \in \Lambda} \alpha_\beta x_i^{\gamma_\beta}$
 for $h= \sum_{\beta \in \Lambda} \alpha_\beta \gamma_\beta$, $\alpha_\beta \in F$,
 where only finite number of $\alpha_\beta$ are nonzero.
Here $X := \lbrace x_1, x_2, x_3, \ldots \rbrace$, $x_j := x_j^1$, $1 \in H$.
 We refer to the elements
 of $F\langle X | H \rangle$ as $H$-polynomials.
Note that here we do not consider any $H$-action on $F \langle X | H \rangle$.

Let $A$ be an associative algebra with a generalized $H$-action.
Any map $\psi \colon X \to A$ has the unique homomorphic extension $\bar\psi
\colon F \langle X | H \rangle \to A$ such that $\bar\psi(x_i^h)=h\psi(x_i)$
for all $i \in \mathbb N$ and $h \in H$.
 An $H$-polynomial
 $f \in F\langle X | H \rangle$
 is an \textit{$H$-identity} of $A$ if $\bar\psi(f)=0$
for all maps $\psi \colon X \to A$. In other words, $f(x_1, x_2, \ldots, x_n)$
 is an $H$-identity of $A$
if and only if $f(a_1, a_2, \ldots, a_n)=0$ for any $a_i \in A$.
 In this case we write $f \equiv 0$.
The set $\Id^{H}(A)$ of all $H$-identities
of $A$ is an ideal of $F\langle X | H \rangle$.
Note that our definition of $F\langle X | H \rangle$
depends on the choice of the basis $(\gamma_\beta)_{\beta \in \Lambda}$ in $H$.
However such algebras can be identified in the natural way,
and $\Id^H(A)$ is the same.

Denote by $P^H_n$ the space of all multilinear $H$-polynomials
in $x_1, \ldots, x_n$, $n\in\mathbb N$, i.e.
$$P^{H}_n = \langle x^{h_1}_{\sigma(1)}
x^{h_2}_{\sigma(2)}\ldots x^{h_n}_{\sigma(n)}
\mid h_i \in H, \sigma\in S_n \rangle_F \subset F \langle X | H \rangle.$$
Then the number $c^H_n(A):=\dim\left(\frac{P^H_n}{P^H_n \cap \Id^H(A)}\right)$
is called the $n$th \textit{codimension of polynomial $H$-identities}
or the $n$th \textit{$H$-codimension} of $A$.

The analog of Amitsur's conjecture for $H$-codimensions of $A$ can be formulated
as follows.

\begin{conjecture}  There exists
 $\PIexp^H(A):=\lim\limits_{n\to\infty}
 \sqrt[n]{c^H_n(A)} \in \mathbb Z_+$.
\end{conjecture}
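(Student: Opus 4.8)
The plan is to adapt the $S_n$-cocharacter technique of \cite{ASGordienko3, GordienkoKochetov}, which already proves the assertion when $A$ admits an $H$-invariant Wedderburn--Mal'cev decomposition, and to dispose of that extra hypothesis by means of a $J$-adic filtration argument; throughout $F$ is algebraically closed of characteristic $0$. Fix a Wedderburn--Mal'cev decomposition $A = B \oplus J$ of vector spaces, where $J = J(A)$ is the nilpotent radical, $J^p = 0$, and $B$ is a maximal semisimple subalgebra, $B \cong A/J$. Since $J$ is $H$-invariant the generalized $H$-action descends to $A/J$, and, by hypothesis, $A/J = \bar B_1 \oplus \dots \oplus \bar B_q$ with each $\bar B_i$ an $H$-simple algebra. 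Transporting this action along the isomorphism $B \cong A/J$ turns $B = B_1 \oplus \dots \oplus B_q$ into an algebra with generalized $H$-action (write $h \star b$ for it), the components $B_i$ being $H$-simple; concretely, $hb = h \star b + \varepsilon(h,b)$ for all $h \in H$, $b \in B$, with $h \star b \in B$ and $\varepsilon(h,b) \in J$. Put
$$ d := \max\bigl\{\, \dim(B_{i_1} + \dots + B_{i_k}) \ \big|\ k \geq 1,\ B_{i_1} J B_{i_2} J \cdots J B_{i_k} \neq 0 \,\bigr\}, $$
the products being computed in $A$. The goal is $\PIexp^H(A) = d$; since $c^H_n(A)$ is exponentially bounded \cite{ASGordienko3}, it suffices to prove $C_1 n^{r_1} d^n \le c^H_n(A) \le C_2 n^{r_2} d^n$ for all large $n$ and suitable constants.

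For the upper bound, let $S_n$ permute $x_1, \dots, x_n$ in $P^H_n$ and decompose the $H$-cocharacter, $\chi^H_n(A) = \sum_{\lambda \vdash n} m_\lambda \chi_\lambda$. Evaluating multilinear $H$-polynomials on a basis of $A$ refining $A = B \oplus J$, and using that $J$ is $H$-invariant --- so $x_i^h$ carries a radical basis vector into $J$, whence at most $p - 1$ variables can take radical values in a nonzero evaluation --- the usual alternation arguments give $m_\lambda = 0$ unless $\lambda_{d+1} \le c$ for a constant $c = c(\dim A)$. Since $\sum_{\lambda \vdash n,\ \lambda_{d+1} \le c} \deg \chi_\lambda \le \mathrm{poly}(n)\, d^n$ by the hook-length formula and the multiplicities $m_\lambda$ grow polynomially in $n$ \cite{ASGordienko3}, we obtain $c^H_n(A) = \sum_\lambda m_\lambda \deg \chi_\lambda \le C_2 n^{r_2} d^n$. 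This step uses only the $H$-invariance of $J$ and runs as in \cite{ASGordienko3}.

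For the lower bound, choose a tuple realising $d$, so $B_{i_1} J B_{i_2} J \cdots J B_{i_k} \neq 0$, and --- taking the connecting radical elements in appropriate powers of $J$ --- fix $w_1, \dots, w_{k-1} \in J$ and $a_j \in B_{i_j}$ whose product $a_1 w_1 a_2 w_2 \cdots w_{k-1} a_k$ remains nonzero in the associated graded algebra $\mathrm{gr}\, A$ of the filtration $A \supseteq J \supseteq J^2 \supseteq \cdots$; set $\bar B := B_{i_1} + \dots + B_{i_k}$, so $\dim \bar B = d$. For every Young diagram $\lambda \vdash n$ obtained from the $d \times \lfloor n/d \rfloor$ rectangle by adjoining and deleting boundedly many boxes, combine, block by block, the non-vanishing alternating $H$-polynomials for the $H$-simple algebras $B_i$ furnished by \cite{ASGordienko3, GordienkoKochetov}, gluing them through $k - 1$ connecting variables, to obtain for each such $\lambda$ a multilinear $H$-polynomial $f_\lambda \in P^H_n$ that is a Young-symmetrizer image corresponding to $\lambda$ and that, provided $f_\lambda \notin \Id^H(A)$, generates an irreducible $FS_n$-submodule with character $\chi_\lambda$ in $P^H_n/(P^H_n \cap \Id^H(A))$. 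The core of the proof is to establish $f_\lambda \notin \Id^H(A)$: substitute preimages in $B$ for the alternating variables and the $w_j$ for the connecting variables; since $B$ is not $H$-invariant, each application of $h \in H$ replaces a substituted $b \in B$ by $h \star b + \varepsilon(h, b)$, so the value of $f_\lambda$ on $A$ equals the expected element of $B J B \cdots J B$ plus terms of strictly larger $J$-degree. Passing to $\mathrm{gr}\, A$ annihilates those, the leading component being precisely the evaluation of $f_\lambda$ in $\mathrm{gr}\, A$ with $A/J$ (placed in degree $0$) carrying its genuine $H$-module algebra structure $\star$ and each $w_j$ in its prescribed positive degree; this evaluation is nonzero by the choice of $f_\lambda$ and of the $w_j$, so $f_\lambda \notin \Id^H(A)$. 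Hence the $f_\lambda$ generate pairwise non-isomorphic irreducible $FS_n$-modules in $P^H_n/(P^H_n \cap \Id^H(A))$ and $c^H_n(A) \ge \sum_\lambda \deg \chi_\lambda \ge C_1 n^{-r_1} d^n$. Combining the two bounds, $\sqrt[n]{c^H_n(A)} \to d \in \mathbb Z_+$, which is the claim. I expect the lifting of non-identities across the radical to be the main obstacle: the purpose of the filtration argument is exactly to push the errors caused by the non-invariance of the Wedderburn complement into higher $J$-degree, so that the exponent is decided by the semisimple leading behaviour --- i.e.\ by the $H$-module algebra $A/J$ --- which is already covered by the results on $H$-simple algebras.
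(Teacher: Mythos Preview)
Your approach and the paper's are essentially the same idea in different clothing. Both exploit the single fact $h\varkappa(b)-\varkappa(hb)\in J$ and then arrange that any ``error'' contribution of this kind is killed. You do this by passing to $\mathrm{gr}\,A$ and reading off the leading $J$-adic component; the paper does it by choosing, among all nonzero products $j_1(\gamma_1\varkappa(b_1))j_2\cdots(\gamma_r\varkappa(b_r))j_{r+1}$, one with \emph{maximal} total $J$-length (its equations~(4)--(5)), so that inserting one more $J$-factor already gives zero. Your requirement that $a_1w_1\cdots w_{k-1}a_k$ be nonzero in $\mathrm{gr}\,A$ is exactly the paper's maximality condition, and your ``errors go to higher $J$-degree and vanish upon passing to $\mathrm{gr}\,A$'' is exactly the paper's ``an extra $J$-factor annihilates the product, so $\varkappa$ behaves like an $H$-map here''. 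The upper bound and the cocharacter/hook-length bookkeeping are identical.

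One point deserves more care. You define $d$ by the ordinary-looking condition $B_{i_1}JB_{i_2}\cdots JB_{i_k}\neq 0$, whereas the paper uses $(H\varkappa(B_{i_1}))A^{+}\cdots(H\varkappa(B_{i_r}))\neq 0$. The upper-bound alternation argument naturally needs the $H$-version: after substitution, a variable $x^{h}$ carrying $b\in\varkappa(B_i)$ lands in $H\varkappa(B_i)\subseteq\varkappa(B_i)+J$, so vanishing of monomials is governed by products of the sets $H\varkappa(B_i)$, not of the $\varkappa(B_i)$ alone. The two integers do coincide, but that equality is itself a consequence of the maximal $J$-length/filtration trick (expand $H\varkappa(B_i)\subseteq\varkappa(B_i)+J$ and kill the $J$-parts at top degree), so in your write-up you should either run the upper bound with the $H$-version of $d$ or insert a line proving the two definitions agree.
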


We call $\PIexp^H(A)$ the \textit{Hopf PI-exponent} of $A$.

\begin{example} Every algebra $A$ is an $H$-module algebra
for $H=F$. In this case the $H$-action is trivial and we get ordinary polynomial identities and their codimensions. 
\end{example}

\begin{example}\label{ExampleIdFG*}
Let $A$ be an associative algebra with an action 
of a group $G$ by automorphisms and anti-automorphisms. Then $A$ is an algebra with
 a generalized $H$-action where $H=FG$.
We introduce the \textit{free $G$-algebra} $F\langle X | G \rangle := F\langle X | H \rangle$,
 the ideal of polynomial $G$-identities $\Id^G(A):=\Id^H(A)$, 
 and $G$-codimensions $c^G_n(A):=c^H_n(A)$.
\end{example}

\begin{example} If $H=U(\mathfrak g)$ where $U(\mathfrak g)$ is the universal enveloping
algebra of a Lie algebra $\mathfrak g$, then an $H$-module algebra is an algebra
with a $\mathfrak g$-action by derivations. The corresponding $H$-identities are called
\textit{differential identities} or \textit{polynomial identities with derivations}.
\end{example}

\begin{theorem}\label{TheoremMainHAssoc} Let $A$ be a finite dimensional non-nilpotent
associative algebra with a generalized $H$-action
where $H$ is an associative algebra with $1$ over an algebraically closed field $F$ of characteristic $0$.
Suppose that the Jacobson radical $J:=J(A)$ is an $H$-submodule.
Let $$A/J = B_1 \oplus \ldots \oplus B_q \text{ (direct sum of $H$-invariant ideals)}$$
where $B_i$ are $H$-simple algebras and let $\varkappa \colon A/J \to A$
be any homomorphism of algebras (not necessarily $H$-linear) such that $\pi\varkappa = \id_{A/J}$ where $\pi \colon A \to A/J$ is the natural projection.
Then there exist constants $C_1, C_2 > 0$, $r_1, r_2 \in \mathbb R$ such that $$C_1 n^{r_1} d^n \leqslant c^{H}_n(A) \leqslant C_2 n^{r_2} d^n\text{ for all }n \in \mathbb N$$
where $$d= \max\dim\left( B_{i_1}\oplus B_{i_2} \oplus \ldots \oplus B_{i_r}
 \mathbin{\Bigl|}  r \geqslant 1,\right.$$ \begin{equation}\label{EqdAssoc}\left. (H\varkappa(B_{i_1}))A^+ \,(H\varkappa(B_{i_2})) A^+ \ldots (H\varkappa(B_{i_{r-1}})) A^+\,(H\varkappa(B_{i_r}))\ne 0\right)\end{equation} and
 $A^+:=A+F\cdot 1$.
\end{theorem}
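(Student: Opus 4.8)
The plan is to estimate $c^{H}_n(A)$ from above and below separately, following the method of A.~Giambruno and M.\,V.~Zaicev \cite{ZaiGia} in the form used in \cite{ASGordienko3, GordienkoKochetov}, but replacing every appeal to an $H$-invariant Wedderburn--Mal'cev decomposition by the ordinary one. First I would reduce: since $A$ is finite dimensional we may replace $H$ by its image $\bar H$ in $\End_F(A)$, so $\dim_F H =: m < \infty$ while $\Id^H(A)$ is unchanged, and we fix an ordinary Wedderburn decomposition $A = B \oplus J$ with $B := \varkappa(A/J) = \varkappa(B_1)\oplus\dots\oplus\varkappa(B_q)$ a maximal semisimple subalgebra (which need \emph{not} be an $H$-submodule) and $J^p = 0$. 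Permuting the free generators identifies $P^{H}_n$ with $(FS_n)^{\oplus m^n}$ as an $S_n$-module, so the $n$-th $H$-cocharacter is $\chi^{H}_n(A) = \sum_{\lambda \vdash n} m_\lambda \chi_\lambda$ and $c^{H}_n(A) = \sum_{\lambda} m_\lambda \deg\chi_\lambda$. It therefore suffices to prove: (a) $m_\lambda$ is bounded by a polynomial in $n$, and there is a constant $s_0$ with $m_\lambda = 0$ whenever $\sum_{i > d}\lambda_i > s_0$; and (b) for infinitely many $n$ there is a partition $\mu = \mu(n)\vdash n$ whose diagram differs by $O(1)$ boxes from the $d\times\lfloor n/d\rfloor$ rectangle and with $m_{\mu}\ne 0$. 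Indeed, the hook-length formula then gives $\deg\chi_\lambda \leqslant n^{s_0}d^n$ for $\lambda$ as in (a) (and only polynomially many such $\lambda$ contribute), and $\deg\chi_{\mu(n)}\geqslant C n^{-c}d^n$, which yields the two inequalities with suitable $C_i$ and $r_i$.

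For (a), the polynomial bound on $m_\lambda$ is standard for finite dimensional algebras with a generalized $H$-action. The support condition follows from a lemma in the spirit of \cite{ZaiGia}: \emph{every multilinear $H$-polynomial alternating in $p$ disjoint sets of $d+1$ variables is an $H$-identity of $A$}. To see this, suppose $f$ is such a polynomial with $f(a_1,\dots,a_n)\ne 0$; we may take each $a_i$ to be a matrix unit of some $\varkappa(B_{t(i)})$ or a basis vector of $J$, and since $f$ alternates in each set, the $d+1$ values of a given set are linearly independent. Expanding each factor $h(a_i)$ along $A = \varkappa(B_{t(i)})\oplus J$ — possible because $J$ is $H$-invariant, hence $\pi\colon A\to A/J$ is $H$-linear and $B_{t(i)}$ is $H$-invariant in $A/J$, so $H\varkappa(B_t)\subseteq\varkappa(B_t)\oplus J$, and $HJ\subseteq J$ — one obtains a monomial-summand $w=w_1\cdots w_n$ whose factors are basis vectors of the $\varkappa(B_t)$ and of $J$ and whose product in $A$ is nonzero. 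Since $\varkappa(B_s)\varkappa(B_t)=0$ for $s\ne t$ and $J^p=0$, such a $w$ splits into runs lying in single components $\varkappa(B_{s_1}),\dots,\varkappa(B_{s_{r'}})$, any two consecutive runs separated by at least one factor from $J$, with at most $p-1$ factors of $w$ lying in $J$ in all, and its value lies in $(H\varkappa(B_{s_1}))A^+\cdots A^+(H\varkappa(B_{s_{r'}}))\ne 0$; by the definition of $d$ the span of these components has dimension $\leqslant d$, so every variable of $f$ whose $w$-factor is not in $J$ has $a_i$ in this $\leqslant d$-dimensional space. Hence each of the $p$ alternating sets, having $d+1$ linearly independent values, must contain a variable whose $w$-factor lies in $J$; but there are at most $p-1$ such variables, a contradiction. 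Given the lemma, if $m_\lambda\ne 0$ then (applying a Young symmetrizer) $A$ has a non-$H$-identity alternating in the column sets of a shape-$\lambda$ tableau, so $\lambda$ has at most $p-1$ columns of height $>d$, and as all columns have height $\leqslant\dim A$ we get $\sum_{i>d}\lambda_i\leqslant (p-1)(\dim A - d)=:s_0$.

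For (b), pick $i_1,\dots,i_r$ realizing the maximum defining $d$, so $W:=(H\varkappa(B_{i_1}))A^+\cdots A^+(H\varkappa(B_{i_r}))\ne 0$, and fix witnessing $h^{(u)}\in H$, $b^{(u)}\in B_{i_u}$, $c^{(u)}\in A^+$. Each $B_{i_u}$, a finite dimensional $H$-simple component of $A/J$, has by the structure theory of such algebras (see \cite{ASGordienko3, GordienkoKochetov}) multilinear non-$H$-identities alternating in $\dim B_{i_u}$ variables and realizing a prescribed value in $B_{i_u}$; since $\pi$ is $H$-linear and $\pi\varkappa=\id$, substituting the arguments of such a polynomial into $\varkappa(B_{i_u})$ yields an element of $A$ with that prescribed image in $A/J$. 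Choosing the prescribed images so as to reproduce (modulo $J$) the witness $W\ne0$, and stringing together $k$ copies of such polynomials for $B_{i_1},\dots,B_{i_r}$ separated by connector variables set equal to the $c^{(u)}$, I would build for a suitable $k=\Theta(n)$ a multilinear $H$-polynomial $f_n\notin\Id^H(A)$ alternating, in a balanced way, in $k$ disjoint sets of roughly $d$ variables; here one must check that the $J$-components produced do not destroy the non-vanishing, which is controlled by $J^p=0$ and the $H$-invariance of $J$. By a standard argument this forces $\chi^H_n(A)$ to contain $\chi_{\mu(n)}$ for $\mu(n)$ differing by $O(1)$ boxes from the $d\times\lfloor n/d\rfloor$ rectangle, whence $m_{\mu(n)}\ne0$.

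The point at which this result genuinely extends \cite{ASGordienko3, GordienkoKochetov}, and the main obstacle, is that $B=\varkappa(A/J)$ need not be an $H$-submodule, so the $\varkappa(B_i)$ carry no $H$-structure of their own inside $A$. For the upper bound this is handled cleanly: one works throughout with the $H$-submodules $H\varkappa(B_i)$ — exactly the objects in the formula for $d$ — and uses only that $J$ is $H$-invariant, which makes $\pi$ $H$-linear and forces $H\varkappa(B_i)\subseteq\varkappa(B_i)\oplus J$, while the $H$-action enters solely through the non-vanishing of the interface products. For the lower bound one cannot avoid the $H$-module structure of the $B_i$ (it is needed precisely when some $B_i$ is $H$-simple without being simple), but one still uses it only on $A/J$, where the $H$-simple decomposition is given, and transports the resulting non-identities to $A$ through $\pi\varkappa=\id$; the delicate part is then to glue these non-identities along the witness for $W\ne0$ so that the product remains nonzero, again controlled by $J^p=0$. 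Once this gluing is in place, verifying that $s_0$, $r_1$, $r_2$, $C_1$, $C_2$ are independent of $n$ is routine.
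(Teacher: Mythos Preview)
Your overall plan matches the paper's proof almost exactly: the upper bound via the cocharacter support lemma and polynomial colength, and the lower bound by gluing alternating non-identities for the $H$-simple blocks along a witness for $d$. The upper-bound argument you sketch is essentially the paper's Lemma~\ref{LemmaAssocUpperCochar} (the paper obtains the sharper bound $\sum_{i>d}\lambda_i\leqslant p-1$ by counting $J$-factors in each column directly, but your weaker constant $(p-1)(\dim A-d)$ is equally sufficient for the asymptotics).

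The genuine gap is in (b), precisely at the point you yourself flag as ``the delicate part''. Saying it is ``controlled by $J^p=0$'' is not enough: when you substitute $\varkappa(\bar b_j)$ into the alternating $H$-polynomials $f_t$ for $B_{i_t}$, every factor $h\varkappa(\bar b_j)=\varkappa(h\bar b_j)+(\text{error in }J)$ produces a $J$-correction, and the number of such factors is $\Theta(n)$, far exceeding $p$. The error terms do not vanish individually by nilpotence, and they could in principle cancel the main term. The paper's resolution is a specific \emph{maximality trick}: one does not take an arbitrary witness for $W\ne 0$, but first inserts as many factors from $J$ as possible between the blocks while keeping the product nonzero (equation~(\ref{EqAssocNonZero})). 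Maximality then forces that inserting even one additional $J$-factor anywhere kills the product (equation~(\ref{EqAssocbazero})). Consequently every $J$-correction coming from the failure of $\varkappa$ to be $H$-linear vanishes in context, and $\varkappa$ ``behaves like a homomorphism of $H$-modules'' for this particular computation. The same device is what makes the cross-block alternation harmless: when a variable from $X_\ell^{(t)}$ is placed into the $B_{i_{t'}}$ slot for $t\ne t'$, its $\varkappa(B_{i_t})$-component annihilates $\varkappa(B_{i_{t'}})$ and its $J$-component is killed by~(\ref{EqAssocbazero}). Without this maximality step your gluing argument does not close; supplying it is exactly the paper's technical contribution over \cite{ASGordienko3, GordienkoKochetov}.

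A smaller point: ``stringing together $k$ copies'' of the witness sequence would require $W$ to be iterable, which need not hold. The paper instead uses \emph{one} copy of the witness but takes each block polynomial $f_t$ with $2k$ internal alternating sets (this is what \cite[Theorem~7]{ASGordienko3} provides), and then alternates across blocks to obtain $2k$ sets of size $d$.
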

\begin{remark}
If $A$ is nilpotent, i.e. $x_1 x_2 \ldots x_p\equiv 0$ for some $p\in\mathbb N$, then
$P^{H}_n \subseteq \Id^{H}(A)$ and $c^H_n(A)=0$ for all $n \geqslant p$.
\end{remark}
\begin{corollary}
The analog of Amitsur's conjecture holds
 for such codimensions.
\end{corollary}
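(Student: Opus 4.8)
Since the Corollary is immediate once Theorem~\ref{TheoremMainHAssoc} is in hand, the plan is to prove the Theorem and then read off the Corollary. The first step is to replace $H$ by its image $\bar H$ in $\End_F(A)$; this affects neither $\Id^H(A)$, nor $c^H_n(A)$, nor the decomposition of $A/J$, nor the integer $d$, and $\bar H$ is finite dimensional and still satisfies~\eqref{EqGeneralizedHopf}. Fixing a basis $\gamma_1,\dots,\gamma_s$ of $\bar H$, the symmetric group $S_n$ acts on $P^H_n$ by permuting the subscripts of the variables while leaving the superscripts fixed, so $P^H_n\cong FS_n\otimes\bar H^{\otimes n}$ is a direct sum of $s^n$ copies of the regular $FS_n$-module; hence $c^H_n(A)=\sum_{\lambda\vdash n}m_\lambda\dim\chi_\lambda$, where $\chi^H_n(A)=\sum_\lambda m_\lambda\chi_\lambda$ is the $S_n$-cocharacter of $A$ and $0\le m_\lambda\le s^n\dim\chi_\lambda$. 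The whole proof then amounts to locating the support of this cocharacter and estimating the dimensions of the irreducibles that occur.

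For the upper bound, fix the vector-space splitting $A=\varkappa(A/J)\oplus J$ with $J^{p+1}=0$. The key lemma is that every multilinear $H$-polynomial that is alternating on sufficiently many and sufficiently long pairwise disjoint sets of variables is an $H$-identity of $A$; more precisely, that there is a constant bounding, for all $\lambda$ with $m_\lambda\ne 0$, the number of boxes of $\lambda$ lying outside its first $d$ rows. To prove it I would evaluate such a polynomial on basis vectors coming from the splitting: each variable lands in some $\varkappa(B_i)$ or in $J$; the radical being nilpotent, only $\le p$ of the $J$-slots can be productive; and, after the $\bar H$-superscripts have been absorbed, a surviving nonzero value of an alternating run reduces, modulo the radical, to a nonzero product $(H\varkappa(B_{i_1}))A^+\cdots(H\varkappa(B_{i_r}))$ as in~\eqref{EqdAssoc} --- the $A^+$-glue produced from the remaining variables and from the unit, the closures $H\varkappa(B_i)$ appearing exactly because $\varkappa$ need not be $H$-linear, yet governed modulo $J$ by $\dim B_i$ since each $B_i$ is $H$-invariant. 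Thus an alternating run can exploit at most $\dim(B_{i_1}\oplus\dots\oplus B_{i_r})\le d$ independent directions. Combined with the fact that $\ell(\lambda)$ is bounded on the support --- which holds because, being finite dimensional, $A$ satisfies an $H$-Capelli identity --- this confines $\chi^H_n(A)$ to polynomially many partitions $\lambda$, each satisfying $\dim\chi_\lambda\le C_2n^{r_2}d^n$ by the standard hook-length estimate; summation gives the upper bound.

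For the lower bound, choose $r$ and $i_1,\dots,i_r$ realising the maximum defining $d$, pick $h_j\in H$, $b_j\in B_{i_j}$ and $w_j\in A^+$ with $(h_1\varkappa(b_1))w_1\cdots w_{r-1}(h_r\varkappa(b_r))\ne 0$, and write $B_{i_j}\cong M_{k_j}(F)$. For $n=m(k_1^2+\dots+k_r^2)+(r-1)+O(1)$ I would build a multilinear $H$-polynomial which is a product of $r$ blocks, the $j$-th block a product of $m$ double-Capelli/Regev-type polynomials alternating in $k_j^2$ variables carrying exactly the superscripts needed to evaluate inside $\varkappa(B_{i_j})$ on matrix units, the blocks separated by glue variables specialised to the $w_j$. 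An evaluation on matrix units shows this polynomial is not an $H$-identity of $A$, and, being a genuine alternator, it has nonzero image under the Young symmetrizer attached to a partition $\lambda$ essentially equal to the union of the rectangles $k_j^2\times m$; hence $m_\lambda\ge 1$, and the hook-length formula gives $\dim\chi_\lambda\ge C_1n^{r_1}d^n$, since $k_1^2+\dots+k_r^2=\dim(B_{i_1}\oplus\dots\oplus B_{i_r})=d$ rows of length roughly $n/d$ produce an irreducible of exponential order $d^n$. This completes the Theorem; the Corollary follows because for non-nilpotent $A$ the two-sided bound forces $\sqrt[n]{c^H_n(A)}\to d\in\mathbb Z_+$, while for nilpotent $A$ the Remark gives $c^H_n(A)=0$ eventually, so $\PIexp^H(A)$ always exists, and the cases $H=FG$ (Example~\ref{ExampleIdFG*}: $J(A)$ is automatically $FG$-invariant and the orbit sums of the Wedderburn blocks are $FG$-simple) and $H=U(\mathfrak g)$ (each simple block is preserved by every derivation) yield the statements for $G$-codimensions and differential codimensions.

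The main obstacle is the upper-bound lemma. Without an $H$-invariant complement of $J$, one cannot simply decompose a substitution into a semisimple and a radical part compatibly with the $H$-action; instead one must follow, through~\eqref{EqGeneralizedHopf}, how the $H$-action redistributes an alternating set of variables among the components $B_i$ and the radical, and verify that the $H$-closures appearing in~\eqref{EqdAssoc} are exactly the right bookkeeping device to make the alternation rank equal to $d$. Once this is settled, the lower bound is comparatively routine --- the work there lies in making the matrix-unit evaluation explicit and in checking that the constructed polynomial survives the relevant Young symmetrizer.
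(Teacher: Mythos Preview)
There are genuine gaps in both directions. For the upper bound, bounding the \emph{support} of $\chi^H_n(A)$ polynomially is not enough: since $c^H_n(A)=\sum_\lambda m_\lambda\dim\chi_\lambda$, you also need a polynomial bound on the colength $\sum_\lambda m_\lambda$, and these multiplicities can grow with $n$. The paper obtains this separately (Theorem~\ref{TheoremAssocColength}) via a nontrivial reduction to ordinary codimensions, realising $P^H_n/(P^H_n\cap\Id^H(A))$ as an $S_n$-quotient of $P_{mn}/(P_{mn}\cap\Id(A))$ restricted along a diagonal embedding $S_n\hookrightarrow S_{mn}$ and then invoking the Berele--Regev colength bound; your sentence ``summation gives the upper bound'' silently assumes $m_\lambda=O(1)$, which is not justified.

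For the lower bound there are two problems. First, the $B_{i_j}$ are only $H$-simple, not simple: over $F=\bar F$ a semisimple $H$-simple algebra may be a product of several matrix blocks (e.g.\ $M_k(F)\times M_k(F)$ with a swap action), so writing $B_{i_j}\cong M_{k_j}(F)$ and using matrix-unit Regev polynomials is illegitimate; the paper instead imports a ready-made alternating non-identity for arbitrary $H$-simple algebras. Second, and this is the actual novelty you have missed, since $\varkappa$ is not $H$-linear, evaluating an $H$-polynomial at elements of $\varkappa(B_{i_j})$ produces $h\varkappa(a)=\varkappa(ha)+(\text{element of }J)$, and these radical corrections can kill the intended nonzero value. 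The paper's key device (equations~\eqref{EqAssocNonZero}--\eqref{EqAssocbazero}) is to first maximise the number of $J$-factors already present in the target product, so that any further $J$-contribution forces the whole product to zero; only under this maximality does $\varkappa$ behave like an $H$-module map in that particular product, and only then can the block polynomials be glued and alternated. Contrary to your closing paragraph, the lower bound (Lemma~\ref{LemmaAssocLowerPolynomial}) is where the new work lies, whereas Lemma~\ref{LemmaAssocUpperCochar} is essentially unchanged from the case with an $H$-invariant Wedderburn--Mal'cev splitting.
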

\begin{remark}
The existence of the map $\varkappa$ follows from the ordinary Wedderburn~--- Mal'cev theorem.
\end{remark}
Theorem~\ref{TheoremMainHAssoc} will be proved in Sections~\ref{SectionSnCocharUpper} and~\ref{SectionLowerAssoc}.

\section{Applications}
 Here we list some important corollaries from Theorem~\ref{TheoremMainHAssoc}.
 
\begin{theorem}\label{TheoremHmoduleAssoc}
Let $A$ be a finite dimensional non-nilpotent associative $H$-module algebra for a Hopf algebra $H$
over a field $F$ of characteristic $0$. Suppose that the antipode of $H$ is bijective and the Jacobson radical $J(A)$ is an $H$-submodule.
Then there exist constants $d\in\mathbb N$, $C_1, C_2 > 0$, $r_1, r_2 \in \mathbb R$ such that $$C_1 n^{r_1} d^n \leqslant c^{H}_n(A) \leqslant C_2 n^{r_2} d^n\text{ for all }n \in \mathbb N.$$
\end{theorem}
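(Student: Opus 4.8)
The plan is to deduce Theorem~\ref{TheoremHmoduleAssoc} from Theorem~\ref{TheoremMainHAssoc} by checking that the hypotheses of the latter are met. The only nontrivial gaps are: (1) a Hopf algebra $H$ acting on $A$ need not be finite dimensional, and its action need not factor through a finite dimensional quotient, so we cannot directly invoke a theorem stated for an \emph{associative algebra $H$ with $1$}; (2) we must verify that a genuine $H$-module algebra structure (with bijective antipode) is a particular instance of a generalized $H$-action in the sense of~\eqref{EqGeneralizedHopf}; (3) we must produce the decomposition of $A/J(A)$ into $H$-simple algebras; and (4) we must pass between the codimension sequences $c^H_n$ computed with respect to the Hopf structure and the ones computed with respect to the replacement associative algebra. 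The base field here is only assumed of characteristic $0$, not algebraically closed, so a scalar extension step is also needed.

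First I would reduce to an algebraically closed base field: extending scalars to $\overline F$ turns $A$ into $\bar A = A \otimes_F \overline F$, an $\bar H = H \otimes_F \overline F$-module algebra, and one checks in the standard way (as in the proofs of the earlier results of Gordienko and Kochetov cited in the Introduction) that $c^H_n(A) = c^{\bar H}_n(\bar A)$ for all $n$, that $J(\bar A) = J(A)\otimes_F\overline F$ remains an $\bar H$-submodule, and that the antipode of $\bar H$ is still bijective. So assume $F = \overline F$. Next, replace $H$ by a suitable \emph{associative} algebra: let $\widetilde H$ be the image of $H$ in $\End_F(A)$ together with the identity endomorphism, so $\widetilde H$ is a finite dimensional associative algebra with $1$ acting faithfully on $A$, and $\Id^{\widetilde H}(A) = \Id^H(A)$ after the obvious identification of free algebras, hence $c^{\widetilde H}_n(A) = c^H_n(A)$. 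The key point is that the Hopf comultiplication together with bijectivity of the antipode guarantees that each $h\in H$, hence each element of $\widetilde H$, satisfies an identity of the form~\eqref{EqGeneralizedHopf}: indeed $h(ab) = (h_{(1)}a)(h_{(2)}b)$ is already of that shape (no anti-automorphism terms are even needed), so $A$ is an algebra with a generalized $\widetilde H$-action. The bijectivity of the antipode is what is used implicitly to control the $\widetilde H$-module structure on ideals, in particular to ensure $J(A)$ being an $H$-submodule transfers to being a $\widetilde H$-submodule, which is automatic since $\widetilde H$ is generated by the operators $a\mapsto ha$.

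The remaining structural input is the decomposition of the semisimple algebra $A/J(A)$. Since $F$ is algebraically closed of characteristic $0$ and $A/J(A)$ is a finite dimensional semisimple associative algebra which is a $\widetilde H$-module algebra, one writes it as a direct sum of its minimal $\widetilde H$-invariant two-sided ideals $B_1,\dots,B_q$; each such $B_i$ is by construction an $\widetilde H$-simple algebra (a nonzero $\widetilde H$-module algebra with no proper nonzero $\widetilde H$-invariant ideals), and the sum is direct because distinct minimal ideals of a semisimple ring intersect trivially. This is exactly the decomposition required in Theorem~\ref{TheoremMainHAssoc}. Applying that theorem to $A$ with the generalized $\widetilde H$-action now yields constants $C_1,C_2>0$, $r_1,r_2\in\mathbb R$ and an integer $d\geqslant 1$ (the maximum dimension appearing in~\eqref{EqdAssoc}, which is manifestly a positive integer) with $C_1 n^{r_1} d^n \leqslant c^{\widetilde H}_n(A) \leqslant C_2 n^{r_2} d^n$, and since $c^{\widetilde H}_n(A) = c^H_n(A)$ for the original algebra (after undoing the scalar extension, using $c^H_n(A) = c^{\bar H}_n(\bar A)$) we are done.

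I expect the main obstacle to be purely bookkeeping rather than conceptual: carefully justifying the equality $c^H_n(A) = c^{\widetilde H}_n(A)$ and $c^H_n(A) = c^{\bar H}_n(\bar A)$, i.e.\ that passing from the Hopf algebra $H$ to the finite dimensional associative quotient $\widetilde H$ acting in $\End_F(A)$, and then extending scalars, does not change the ideal of multilinear $H$-identities nor the codimension sequence. One has to match up the free algebras $F\langle X\mid H\rangle$ and $F\langle X\mid\widetilde H\rangle$ via the surjection $H\twoheadrightarrow\widetilde H$ and observe that the kernel consists of polynomials that vanish identically on $A$, so it lies in $\Id^H(A)$ and the quotients $P_n^H/(P_n^H\cap\Id^H(A))$ are canonically isomorphic; the scalar-extension step is the standard argument that multilinear components are preserved under field extension in characteristic $0$. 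The genuinely new content — removing the hypothesis of an $H$-invariant Wedderburn--Mal'cev decomposition — is entirely absorbed into Theorem~\ref{TheoremMainHAssoc}, whose proof is deferred; here we only have to feed it the right data.
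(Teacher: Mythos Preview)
Your approach is essentially the paper's: extend scalars to an algebraically closed field, verify the hypotheses of Theorem~\ref{TheoremMainHAssoc}, and apply it. One unnecessary complication is your detour through the image $\widetilde H\subseteq\End_F(A)$. Theorem~\ref{TheoremMainHAssoc} is stated for an \emph{arbitrary} associative algebra $H$ with $1$, with no finite-dimensionality assumption, and a Hopf algebra already is such an algebra; the module-algebra identity $h(ab)=(h_{(1)}a)(h_{(2)}b)$ is directly an instance of~\eqref{EqGeneralizedHopf} (with the anti-automorphism terms set to zero). So your concern~(1) is unfounded, and the paper simply extends scalars, notes that $J\otimes_F K$ is still the radical and still $H\otimes_F K$-invariant, cites \cite[Lemma~1]{GordienkoKochetov} for the decomposition $A/J=B_1\oplus\cdots\oplus B_q$ into $H$-simple ideals, and invokes Theorem~\ref{TheoremMainHAssoc}.

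You also mislocate the role of the bijective antipode: it has nothing to do with $J(A)$ being an $\widetilde H$-submodule (that, as you yourself note, is automatic from the definition of $\widetilde H$). Rather, the bijectivity hypothesis is what is used in \cite[Lemma~1]{GordienkoKochetov} to obtain the $H$-simple decomposition of $A/J$; in your own direct argument via minimal $H$-invariant ideals, it is the step ensuring that the complementary ideal of an $H$-invariant ideal in a semisimple algebra is again $H$-invariant (via an antipode argument on annihilators) that uses the Hopf structure. Otherwise your sketch is fine.
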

\begin{proof}
Let $K \supset F$ be an extension of the field $F$.
Then $$(A \otimes_F K)/(J(A) \otimes_F K) \cong (A/J(A)) \otimes_F K$$
is again a semisimple algebra. Since $J(A) \otimes_F K$ is nilpotent, $J(A \otimes_F K) = J(A) \otimes_F K$.
In particular, $J(A \otimes_F K)$ is $H\otimes_F K$-invariant.

Moreover, $H$-codimensions do not change upon an extension of the base field.
The proof is analogous to the case of ordinary codimensions~\cite[Theorem~4.1.9]{ZaiGia}.
Hence we may assume $F$ to be algebraically closed.
By~\cite[Lemma~1]{GordienkoKochetov}, $A/J(A) = B_1 \oplus \ldots \oplus B_q$ (direct sum of $H$-invariant ideals) for some $H$-simple algebras $B_i$. Now we apply Theorem~\ref{TheoremMainHAssoc}.
\end{proof}
\begin{theorem}\label{TheoremDiffAssoc}
Let $A$ be a finite dimensional non-nilpotent
associative algebra  over a field $F$ of characteristic $0$ with an action of a Lie algebra $\mathfrak g$ by derivations. Then there exist constants $d\in\mathbb N$, $C_1, C_2 > 0$, $r_1, r_2 \in \mathbb R$ such that $$C_1 n^{r_1} d^n \leqslant c^{U(\mathfrak g)}_n(A) \leqslant C_2 n^{r_2} d^n\text{ for all }n \in \mathbb N.$$
\end{theorem}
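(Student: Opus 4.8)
The plan is to obtain Theorem~\ref{TheoremDiffAssoc} as a direct consequence of Theorem~\ref{TheoremHmoduleAssoc} applied to the Hopf algebra $H=U(\mathfrak g)$. Recall that $U(\mathfrak g)$ is a Hopf algebra whose comultiplication, counit and antipode are given on $\mathfrak g$ by $\Delta x = x\otimes 1 + 1\otimes x$, $\varepsilon(x)=0$ and $S(x)=-x$. For $h=x\in\mathfrak g$ the module-algebra axiom $h(ab)=(h_{(1)}a)(h_{(2)}b)$ becomes $x(ab)=(xa)b+a(xb)$, i.e.\ it is exactly the Leibniz rule; hence an action of $\mathfrak g$ on $A$ by derivations is precisely the same thing as a structure of $U(\mathfrak g)$-module algebra on $A$, and the $U(\mathfrak g)$-codimensions $c^{U(\mathfrak g)}_n(A)$ are the differential codimensions of $A$. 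Since $S$ restricts to $-\id$ on the generators of $U(\mathfrak g)$, we have $S^2=\id_{U(\mathfrak g)}$, so the antipode is bijective. Observe that Theorem~\ref{TheoremHmoduleAssoc} requires no finiteness assumption on $H$, so there is no need to assume $\mathfrak g$ finite dimensional.

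The only remaining hypothesis of Theorem~\ref{TheoremHmoduleAssoc} to be verified is that the Jacobson radical $J=J(A)$ is a $U(\mathfrak g)$-submodule of $A$; since $J$ is automatically closed under every element of $U(\mathfrak g)$ once it is closed under $\mathfrak g$, this amounts to showing $\delta(J)\subseteq J$ for every derivation $\delta$ of $A$ in the image of $\mathfrak g\to\Der(A)\subseteq\End_F(A)$. This is the familiar fact that the Jacobson radical of a finite dimensional associative algebra over a field of characteristic $0$ is invariant under all of its derivations. One way to see it is via the Jordan decomposition $\delta=\delta_s+\delta_n$ in $\End_F(A)$: both $\delta_s$ and $\delta_n$ turn out to be derivations of $A$; the nilpotent one $\delta_n$ exponentiates to automorphisms $\exp(t\delta_n)=\sum_{k\geqslant 0}\frac{t^k}{k!}\delta_n^k$, $t\in F$, which preserve $J$, so that $\delta_n(J)\subseteq J$ after comparing coefficients of $t$ over the infinite field $F$; while the semisimple one $\delta_s$ is diagonalizable over $\bar F$ and its eigenspace decomposition makes $A\otimes_F\bar F$ a graded algebra whose radical $J(A)\otimes_F\bar F$ is a graded ideal, whence $\delta_s(J)\subseteq J$ as well. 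Characteristic $0$ is genuinely used here.

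All hypotheses of Theorem~\ref{TheoremHmoduleAssoc} now hold: $A$ is finite dimensional, non-nilpotent, a $U(\mathfrak g)$-module algebra over the field $F$ of characteristic $0$, the antipode of $U(\mathfrak g)$ is bijective, and $J(A)$ is a $U(\mathfrak g)$-submodule. Therefore there exist $d\in\mathbb N$, $C_1,C_2>0$ and $r_1,r_2\in\mathbb R$ with $C_1 n^{r_1} d^n\leqslant c^{U(\mathfrak g)}_n(A)\leqslant C_2 n^{r_2} d^n$ for all $n\in\mathbb N$, which is precisely the assertion. (Alternatively, one may keep $H$ finite dimensional by first replacing $\mathfrak g$ with its image $\bar{\mathfrak g}$ in $\Der(A)\subseteq\End_F(A)$, a finite dimensional Lie algebra: the value of a differential polynomial on $A$ depends on its coefficients only through their images in $\End_F(A)$, so $c^{U(\mathfrak g)}_n(A)=c^{U(\bar{\mathfrak g})}_n(A)$ for all $n$, and the rest of the argument is unchanged.) The main — and essentially the only nontrivial — point is the invariance of the radical under derivations.
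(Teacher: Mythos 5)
Your proof is correct and follows essentially the same route as the paper: identify a $\mathfrak g$-action by derivations with a $U(\mathfrak g)$-module algebra structure, note the antipode is bijective, verify that $J(A)$ is invariant under all derivations, and apply Theorem~\ref{TheoremHmoduleAssoc}. The only difference is that the paper simply cites Dixmier's Lemma~3.2.2 for the invariance of the radical under derivations in characteristic $0$, whereas you sketch a proof of that fact via the Jordan decomposition; the reduction is otherwise identical.
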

\begin{proof}
By \cite[Lemma 3.2.2]{Dixmier}, the Jacobson radical (which coincides
with the prime radical) of a finite dimensional associative algebra is invariant under all
derivations. Hence we may apply Theorem~\ref{TheoremHmoduleAssoc}.
\end{proof}

\begin{theorem}\label{TheoremGAssoc}
Let $A$ be a finite dimensional non-nilpotent
associative algebra over a field $F$ of characteristic $0$ with an action of a group $G$ by automorphisms and anti-automorphisms.
Then there exist constants $d\in\mathbb N$, $C_1, C_2 > 0$, $r_1, r_2 \in \mathbb R$ such that $$C_1 n^{r_1} d^n \leqslant c^{G}_n(A) \leqslant C_2 n^{r_2} d^n\text{ for all }n \in \mathbb N.$$
\end{theorem}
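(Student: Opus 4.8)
The plan is to reduce Theorem~\ref{TheoremGAssoc} to Theorem~\ref{TheoremHmoduleAssoc} exactly as Theorem~\ref{TheoremDiffAssoc} was reduced to it, the only obstacle being that $FG$ is \emph{not} a Hopf algebra when $G$ acts by anti-automorphisms as well, so the hypothesis ``antipode bijective'' cannot be invoked literally. Instead I would invoke Theorem~\ref{TheoremMainHAssoc} directly with $H=FG$, viewed as an associative algebra with $1$ carrying the generalized $FG$-action described in Example~\ref{ExampleIdFG*}: for $g\in G$ acting as an automorphism one has $g(ab)=(ga)(gb)$, and for $g$ acting as an anti-automorphism one has $g(ab)=(gb)(ga)$, both of which are instances of~\eqref{EqGeneralizedHopf}. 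Thus $A$ is a finite dimensional non-nilpotent associative algebra with a generalized $H$-action, and $c^G_n(A)=c^H_n(A)$ by the identification of Example~\ref{ExampleIdFG*}.

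Next I would verify the two structural hypotheses of Theorem~\ref{TheoremMainHAssoc}. First, the Jacobson radical $J:=J(A)$ must be an $H$-submodule; since $J$ is characteristic and every automorphism or anti-automorphism of $A$ sends $J(A)$ onto $J(A)$ (the radical of an anti-isomorphic algebra being the image of the radical, and $A$ being anti-isomorphic to itself here), $J$ is $G$-invariant, hence an $FG$-submodule. Second, I need $A/J=B_1\oplus\ldots\oplus B_q$, a direct sum of $H$-invariant ideals that are $H$-simple. As in the proof of Theorem~\ref{TheoremHmoduleAssoc}, $H$-codimensions are unchanged under extension of the base field (the argument mirrors \cite[Theorem~4.1.9]{ZaiGia}), so I may assume $F$ algebraically closed; then $A/J$ is semisimple, and decomposing it into the direct sum of its minimal $G$-invariant ideals gives the required $B_i$, each of which is $H$-simple by minimality. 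Finally, the map $\varkappa\colon A/J\to A$ with $\pi\varkappa=\id_{A/J}$ is furnished by the ordinary Wedderburn--Mal'cev theorem (no $H$-linearity is demanded).

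With all hypotheses in place, Theorem~\ref{TheoremMainHAssoc} yields constants $C_1,C_2>0$, $r_1,r_2\in\mathbb R$ and the integer
$$d=\max\dim\bigl(B_{i_1}\oplus\ldots\oplus B_{i_r}\mid r\geqslant 1,\ (H\varkappa(B_{i_1}))A^+\ldots(H\varkappa(B_{i_r}))\ne 0\bigr)\in\mathbb N$$
such that $C_1 n^{r_1}d^n\leqslant c^H_n(A)\leqslant C_2 n^{r_2}d^n$ for all $n$, and since $c^G_n(A)=c^H_n(A)$ this is precisely the claimed estimate. The main point to be careful about is simply that $H=FG$ here is treated as an associative algebra with a generalized action rather than as a Hopf algebra, so one must route through Theorem~\ref{TheoremMainHAssoc} instead of Theorem~\ref{TheoremHmoduleAssoc}; everything else is routine. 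One should also note that $d\geqslant 1$ automatically since $A$ is non-nilpotent, so $A/J\neq 0$ and at least one single-summand product is nonzero, which is what guarantees $d\in\mathbb N$ rather than merely $d\geqslant 0$.
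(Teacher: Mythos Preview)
Your proposal is correct and follows essentially the same route as the paper: reduce to an algebraically closed base field, note that $J(A)$ is invariant under all automorphisms and anti-automorphisms, decompose the semisimple quotient $A/J$ into $H$-simple (i.e.\ minimal $G$-invariant) ideals, and apply Theorem~\ref{TheoremMainHAssoc} with $H=FG$. The paper's proof is terser---it cites \cite[Lemma~2]{GordienkoKochetov} for the $H$-simple decomposition rather than arguing it directly---but the content is the same.
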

\begin{proof}
Again, $G$-codimensions do not change upon an extension of the base field.
Hence we may assume $F$ to be algebraically closed. The radical is invariant under all automorphisms and anti-automorphisms. Now we apply~\cite[Lemma~2]{GordienkoKochetov} and Theorem~\ref{TheoremMainHAssoc}.
\end{proof}

The algebra in the example below has no $G$-invariant Wedderburn~--- Mal'cev decomposition, however it satisfies the analog of Amitsur's conjecture.

\begin{example}[Yuri Bahturin]\label{ExampleGnoninvWedderburn}
Let $F$ be a field of characteristic $0$ and let $$A = \left\lbrace\left(\begin{array}{cc} C & D \\
0 & 0
  \end{array}\right) \mathrel{\biggl|} C, D\in M_m(F)\right\rbrace
  \subseteq M_{2m}(F),$$ $m \geqslant 2$.
      Consider $\varphi \in \Aut(A)$ where
  $$\varphi\left(\begin{array}{cc} C & D \\
0 & 0
  \end{array}\right)=\left(\begin{array}{cc} C & C+D \\
0 & 0
  \end{array}\right).$$
  Then $A$ is an algebra with an action of the group $G=\langle \varphi \rangle
  \cong \mathbb Z$ by automorphisms. There is no $G$-invariant Wedderburn~--- Mal'cev
  decomposition for $A$, however there exist constants $C_1, C_2 > 0$, $r_1, r_2 \in \mathbb R$ such that $$C_1 n^{r_1} m^{2n} \leqslant c^G_n(A) \leqslant C_2 n^{r_2} m^{2n}\text{ for all }n \in \mathbb N.$$
\end{example}
\begin{proof} 
Note that
 \begin{equation}\label{EqGnoninvWedderburn}
  J(A)=\left\lbrace\left(\begin{array}{cc} 0 & D \\
0 & 0
  \end{array}\right) \mathrel{\biggl|} D\in M_m(F)\right\rbrace.
  \end{equation}
Suppose $A = B \oplus J(A)$ (direct sum of $G$-invariant subspaces)
for some maximal semisimple subalgebra $B$. Since $\varphi(a)-a \in J(A)$
for all $a \in A$, we have $\varphi(a)=a$ for all $a \in B$. Thus
$B \subseteq J(A)$ and we get a contradiction. Therefore, there is no $G$-invariant Wedderburn~--- Mal'cev
  decomposition for $A$.

Again, $G$-codimensions do not change upon an extension of the base field.
Moreover, upon an extension of $F$, $A$ 
remains an algebra of the same type.
Thus without loss of generality we may assume
 $F$ to be algebraically closed.
 
 Note that $A/J \cong M_m(F)$
  is a simple algebra. Hence $\PIexp^G(A)=\dim M_m(F)= m^2$
  by Theorems~\ref{TheoremMainHAssoc} and~\ref{TheoremGAssoc}.
\end{proof}

\begin{example}
Let $A$ be the associative algebra from Example~\ref{ExampleGnoninvWedderburn}.
      Denote by $\mathfrak g$ the corresponding Lie algebra with the commutator $[x,y]=xy-yx$
      and consider the adjoint action of $\mathfrak g$ on $A$ by derivations.
        Then there is no $\mathfrak g$-invariant Wedderburn~--- Mal'cev
  decomposition for $A$, however there exist constants $C_1, C_2 > 0$, $r_1, r_2 \in \mathbb R$ such that $$C_1 n^{r_1} m^{2n} \leqslant c^{U(\mathfrak g)}_n(A) \leqslant C_2 n^{r_2} m^{2n}\text{ for all }n \in \mathbb N.$$
\end{example}
\begin{proof} Suppose $A= B \oplus J(A)$ (direct sum of $\mathfrak g$-submodules) for some maximal semisimple associative subalgebra $B$.
  Then $B$ is a Lie ideal of $\mathfrak g$. By~(\ref{EqGnoninvWedderburn}), $J(A)$ is Abelian as a Lie algebra. Thus the center of $\mathfrak g$ contains $J(A)$, which is not true.
  We get a contradiction. Hence there is no $\mathfrak g$-invariant Wedderburn~--- Mal'cev
  decomposition for $A$.
  
  Again, without loss of generality we may assume
 $F$ to be algebraically closed. Since $A/J \cong M_m(F)$
  is a simple algebra, $\PIexp^{U(\mathfrak g)}(A)=\dim M_m(F)= m^2$
  by Theorems~\ref{TheoremMainHAssoc} and~\ref{TheoremDiffAssoc}.
\end{proof}
\begin{remark} The radical of Sweedler's algebra with an action of its dual is not $H$-invariant,
however the analog of Amitsur's conjecture holds for its $H$-identities~\cite[Section~7.4]{ASGordienko3}.
\end{remark}

\section{$S_n$-cocharacters and upper bound}\label{SectionSnCocharUpper}

One of the main tools in the investigation of polynomial
identities is provided by the representation theory of symmetric groups.

Let $A$ be an associative algebra with a generalized $H$-action
where $H$ is an associative algebra with $1$ over a field $F$ of characteristic $0$.
 The symmetric group $S_n$  acts
 on the spaces $\frac {P^H_n}{P^H_{n}
  \cap \Id^H(A)}$
  by permuting the variables.
  Irreducible $FS_n$-modules are described by partitions
  $\lambda=(\lambda_1, \ldots, \lambda_s)\vdash n$ and their
  Young diagrams $D_\lambda$.
   The character $\chi^H_n(A)$ of the
  $FS_n$-module $\frac {P^H_n}{P^H_n
   \cap \Id^H(A)}$ is
   called the $n$th
  \textit{cocharacter} of polynomial $H$-identities of $A$.
   We can rewrite $\chi^H_n(A)$ as
  a sum $$\chi^H_n(A)=\sum_{\lambda \vdash n}
   m(A, H, \lambda)\chi(\lambda)$$ of
  irreducible characters $\chi(\lambda)$.
Let  $e_{T_{\lambda}}=a_{T_{\lambda}} b_{T_{\lambda}}$
and
$e^{*}_{T_{\lambda}}=b_{T_{\lambda}} a_{T_{\lambda}}$
where
$a_{T_{\lambda}} = \sum_{\pi \in R_{T_\lambda}} \pi$
and
$b_{T_{\lambda}} = \sum_{\sigma \in C_{T_\lambda}}
 (\sign \sigma) \sigma$,
be Young symmetrizers corresponding to a Young tableau~$T_\lambda$.
Then $M(\lambda) = FS_n e_{T_\lambda} \cong FS_n e^{*}_{T_\lambda}$
is an irreducible $FS_n$-module corresponding to
 a partition~$\lambda \vdash n$.
  We refer the reader to~\cite{Bahturin, DrenKurs, ZaiGia}
   for an account
  of $S_n$-representations and their applications to polynomial
  identities.

Theorem~\ref{TheoremAssocColength} below is a generalization
of~\cite[Theorem~13~(b)]{BereleHopf} and the remark after~\cite[Theorem~14]{BereleHopf}.  
  
\begin{theorem}\label{TheoremAssocColength}
Let $A$ be a finite dimensional associative algebra with a generalized $H$-action
where $H$ is an associative algebra with $1$ over a field $F$ of characteristic $0$.
Then there exist constants $C_3 > 0$, $r_3 \in \mathbb N$ such that
 $$\sum_{\lambda \vdash n} m(A,H,\lambda)
\leqslant C_3n^{r_3}\text{ for all }n \in \mathbb N.$$
\end{theorem}
\begin{remark}
Cocharacters do not change
upon an extension of the base field $F$
(the proof is completely analogous to \cite[Theorem 4.1.9]{ZaiGia}),
 so we may assume $F$ to be algebraically closed.
\end{remark}
\begin{proof}
Consider ordinary polynomial identities and cocharacters of $A$.
We may define them as $H$-identities and $H$-cocharacters
for $H=F$:
$P_n := P^F_n$,
$\chi_n(A) := \chi^F_n(A)$,
$m(A,\lambda):= m(A,F,\lambda)$,
$\Id(A):=\Id^F(A)$.
By the Berele~--- Regev theorem (see e.g.~\cite[Theorem~4.9.3]{ZaiGia}),
\begin{equation}\label{EqAssocOrdinaryColength}
\sum_{\lambda \vdash n} m(A,\lambda)
\leqslant C_4 n^{r_4}
\end{equation}
 for some $C_4 > 0$ and $r_4 \in \mathbb N$.

Let $G_1 \subseteq G_2$ be finite groups and let $W$ be
an $FG_2$-module.
Denote by $W \downarrow G_1$ the module $W$ with the $G_2$-action restricted to $G_1$.

Let $\zeta \colon H \to \End_F(A)$  be the homomorphism corresponding to the $H$-action,
and let $\bigl(\zeta(\gamma_j)\bigr)_{j=1}^m$, $\gamma_j \in H$, be a basis in $\zeta(H)$. 

Consider the diagonal embedding $\varphi \colon S_n \to S_{mn}$,
$$\varphi(\sigma):=\left(\begin{array}{cccc|lllc|c}
1 & 2 & \ldots & n & n+1 & n+2 & \ldots & 2n & \ldots  \\
\sigma(1) & \sigma(2) & \ldots & \sigma(n) &
n+\sigma(1) & n+\sigma(2) & \ldots & n+\sigma(n) & \ldots
\end{array}
\right)$$
and the $S_n$-homomorphism $\pi \colon (P_{mn}\downarrow \varphi(S_n)) \to P_n^H$
 defined by $\pi(x_{n(i-1)+t})=x^{\gamma_i}_t$, $1 \leqslant i \leqslant m$,
 $1 \leqslant t \leqslant n$. Note that $\pi(P_{mn} \cap \Id(A))
 \subseteq P^H_n \cap \Id^H(A)$ and 
 $x^h - \sum_{j=1}^m \alpha_j x^{\gamma_j} \in \Id^H(A)$
 for all $h \in H$ and $\alpha_j \in F$ such that $\zeta(h)=\sum_{j=1}^m \alpha_j \zeta(\gamma_j)$.
 Hence the $FS_n$-module $\frac{P^H_n}{P^H_n \cap \Id^H(A)}$
 is a homomorphic image of the $FS_n$-module
 $\left(\frac{P_{mn}}{P_{mn} \cap \Id(A)}\right)\downarrow \varphi(S_n)$.
 Denote by  $\length(M)$ the number of irreducible components
of a module $M$.
Then
$$ \sum_{\lambda \vdash n} m(A,H,\lambda)
= \length\left(\frac{P^H_n}{P^H_n \cap \Id^H(A)}\right)
 \leqslant \length\left(\left(\frac{P_{mn}}{P_{mn} \cap \Id(A)}\right)\downarrow \varphi(S_n)\right).
$$

Therefore, it is sufficient to prove
 that  $\length\left(\left(\frac{P_{mn}}{P_{mn} \cap \Id(A)}\right)\downarrow \varphi(S_n)
 \right)$ is polynomially bounded.
Replacing $|G|$ with $m$ in~\cite[Lemma 10 and 12]{ASGordienko2} (or, alternatively, using the proof of~\cite[Theorem~13~(b)]{BereleHopf}), we derive this from~(\ref{EqAssocOrdinaryColength})
and~\cite[Theorem~4.6.2]{ZaiGia}.
\end{proof}

In the next two lemmas we consider
 a finite dimensional associative algebra with a generalized $H$-action
 having an $H$-invariant nilpotent ideal $J$
where $H$ is an associative algebra with $1$ over a field $F$ of characteristic $0$ and $J^p=0$
for some $p\in\mathbb N$.
Fix a decomposition $A/J = B_1 \oplus \ldots \oplus B_q $
where $B_i$ are some subspaces.
 Let $\varkappa \colon A/J \to A$
be an $F$-linear map such that $\pi\varkappa = \id_{A/J}$ where $\pi \colon A \to A/J$
is the natural projection. Define the number $d$ by~(\ref{EqdAssoc}).

\begin{lemma}\label{LemmaAssocUpperCochar}
 Let $n\in\mathbb N$ and $\lambda = (\lambda_1, \ldots, \lambda_s) \vdash n$. Then if $\sum_{k=d+1}^s \lambda_k \geqslant p$, we have $m(A, H, \lambda)=0$. 
\end{lemma}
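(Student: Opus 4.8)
The plan is to show that a multilinear $H$-polynomial lying in the ideal generated by the identity $e_{T_\lambda}$ (for $\lambda$ with a long tail) necessarily evaluates to zero on $A$, by a pigeonhole argument on how many variables must be substituted from the radical $\varkappa(A/J)$-part versus the $J$-part. Since $m(A,H,\lambda)\ne 0$ would mean that some $FS_n$-submodule isomorphic to $M(\lambda)$ survives modulo $\Id^H(A)$, it suffices to prove that $e^*_{T_\lambda} f \in \Id^H(A)$ for every $f\in P_n^H$ and every Young tableau $T_\lambda$ of shape $\lambda$ with $\sum_{k=d+1}^s \lambda_k \geqslant p$. Equivalently, since $e^*_{T_\lambda}$ is (up to scalar) an idempotent generating $M(\lambda)$, it is enough to evaluate multilinear polynomials of the form $g = b_{T_\lambda} a_{T_\lambda} h$ with $h$ a monomial, and show every such $g$ vanishes on $A$.

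First I would reduce to evaluating $g$ on the "standard" generators: write $A = \varkappa(A/J) \oplus J$ as vector spaces (not as $H$-modules), so that $A^+ = \varkappa(B_1)\oplus\cdots\oplus\varkappa(B_q)\oplus J \oplus F\cdot 1$, and correspondingly $h a$ for $a$ in one of these summands and $h\in H$ lands, using the generalized Hopf action \eqref{EqGeneralizedHopf}, in a sum of products; but for a \emph{linear} evaluation it is enough to substitute each $x_i$ by a single basis element of one of these summands. Since the polynomial is multilinear, an evaluation is determined by a function assigning to each index $i\in\{1,\dots,n\}$ one of the "types": a basis element of some $H\varkappa(B_j)$, or a basis element of $J$, or the scalar $1$. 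Group the columns of $T_\lambda$: because the first $d$ rows can absorb at most $d$ entries per column, if $\sum_{k>d}\lambda_k \geqslant p$ then at least $p$ columns have height $>d$, hence in the alternation $b_{T_\lambda}$ we are antisymmetrizing, within each such column, over $\geqslant d+1$ indices.

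The key step is the following dichotomy applied column by column. Fix an evaluation. For a column $C$ of height $\ell \geqslant d+1$: the $\ell$ arguments sitting in the positions of $C$ get antisymmetrized. If two of them are substituted from $J$ (or more precisely, if among the $\ell$ slots of a height-$(d+1)$ column at least two receive radical elements from $J$), then... actually the cleaner route is: antisymmetrizing over $\ell \geqslant d+1$ arguments each of which is either in $J$ or in some $H\varkappa(B_{j})$, and using that the $H\varkappa(B_j)$-components interacting nontrivially can span at most $d$ dimensions by the very definition \eqref{EqdAssoc} of $d$ — any product $(H\varkappa(B_{i_1}))A^+\cdots(H\varkappa(B_{i_r}))$ that is nonzero has $\sum\dim B_{i_t}\leqslant d$ — we conclude that in each height-$(d+1)$ column the alternation forces at least one argument to come from $J$. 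Running this over all $\geqslant p$ such columns and then using that the alternation makes these $\geqslant p$ radical-substituted arguments occur in a single monomial as a product of $\geqslant p$ elements of $J$ (interspersed with other factors), we get a product lying in $J^p = 0$. Hence every such $g$ vanishes, so $m(A,H,\lambda)=0$.

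The main obstacle I anticipate is the bookkeeping that turns "at least one $J$-argument per long column" into "a product of $\geqslant p$ elements of $J$ in the final monomial": one must check that the antisymmetrizations in distinct columns are genuinely independent (they are, since the columns partition the variable set) and that collecting one $J$-factor from each of $p$ distinct columns yields, in the expanded monomial, $p$ honest occurrences of elements of $J$ multiplied together in $A$ — no cancellation, no reassociation that would merge them with the semisimple part in a way that escapes $J^p$. Handling the generalized Hopf action on these $J$-arguments is harmless because $J$ is $H$-invariant, so $h\cdot(\text{element of }J)\in J$ still. Once that is set up, the counting of columns of height exceeding $d$ under the hypothesis $\sum_{k=d+1}^s\lambda_k\geqslant p$ is immediate, and the conclusion $m(A,H,\lambda)=0$ follows.
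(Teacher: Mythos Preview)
Your overall strategy matches the paper's exactly: show $e^*_{T_\lambda}f\in\Id^H(A)$ by substituting basis elements from $\varkappa(B_1)\oplus\cdots\oplus\varkappa(B_q)\oplus J$, using the definition of $d$ to bound the semisimple part, and then counting how many substitutions must land in $J$. The ``main obstacle'' you worry about at the end is not one: $J$ is an $H$-invariant ideal, so in every monomial each $J$-slot contributes an honest factor in $J$, and $p$ such factors put the product in $J^p=0$.

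There is, however, a genuine gap in your counting. You assert that $\sum_{k>d}\lambda_k\geqslant p$ forces at least $p$ columns of height $>d$. That is false: the number of columns of height $>d$ equals $\lambda_{d+1}$, which can be much smaller than $\sum_{k>d}\lambda_k$ (take $d=1$, $\lambda=(1^p)$: one column, yet $\sum_{k>1}\lambda_k=p-1$). Consequently your ``one $J$-element per tall column'' yields only $\lambda_{d+1}$ radical factors, which need not reach $p$. The fix, and this is precisely what the paper does, is to sharpen the per-column conclusion: once the nonvanishing of the product forces $\sum\dim B_{i_t}\leqslant d$, the alternation in a column of height $h$ requires \emph{distinct} basis elements there, so at most $d$ of them can come from $\varkappa(B_{i_1})\oplus\cdots\oplus\varkappa(B_{i_r})$ and hence at least $\max(0,h-d)$ come from $J$. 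Summing over all columns gives exactly $\sum_{k>d}\lambda_k\geqslant p$ radical factors, and now $J^p=0$ finishes the argument.
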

\begin{proof}
It is sufficient to prove that $e^{*}_{T_\lambda}f \in \Id^H(A)$ for all $f \in P_n$ and for all Young tableaux $T_\lambda$ corresponding to $\lambda$.

Fix a basis in $A$ that is a union of bases of~$\varkappa(B_1),\ldots, \varkappa(B_q)$ and~$J$.
Since $e^{*}_{T_\lambda}f$ is multilinear, it sufficient to prove that $e^{*}_{T_\lambda}f$
vanishes under all evaluations on basis elements.
Fix some substitution of basis elements and choose $1 \leqslant i_1,\ldots,i_r \leqslant q$
such that all the elements substituted belong to $\varkappa(B_{i_1})\oplus \ldots \oplus \varkappa(B_{i_r}) \oplus J$, and for each $k$ we have an element being substituted from $\varkappa(B_{i_k})$.
Then we may assume that $\dim(B_{i_1}\oplus \ldots \oplus B_{i_r}) \leqslant d$,
since otherwise $e^{*}_{T_\lambda}f$ is zero by the definition of $d$.
 Note that
$e^{*}_{T_\lambda} = b_{T_\lambda} a_{T_\lambda}$
and $b_{T_\lambda}$ alternates the variables of each column
of $T_\lambda$. Hence if $e^{*}_{T_\lambda} f$ does not vanish, this implies that different basis elements
are substituted for the variables of each column. 
Therefore, at least $\sum_{k=d+1}^s \lambda_k \geqslant p$ elements must be taken from $J$.
Since $J^p = 0$, we have $e^{*}_{T_\lambda} f \in \Id^H(A)$.
\end{proof}

\begin{lemma}\label{LemmaAssocUpper} 
If $d > 0$, then there exist constants $C_2 > 0$, $r_2 \in \mathbb R$
such that $c^H_n(A) \leqslant C_2 n^{r_2} d^n$
for all $n \in \mathbb N$. In the case $d=0$, the algebra $A$ is nilpotent.
\end{lemma}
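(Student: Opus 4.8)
The plan is to deduce the upper bound $c^H_n(A) \leqslant C_2 n^{r_2} d^n$ from the cocharacter information gathered so far, using the standard dimension formula for irreducible $S_n$-modules. First I would recall that
$$c^H_n(A) = \sum_{\lambda \vdash n} m(A,H,\lambda)\, \dim M(\lambda),$$
so the task reduces to controlling the sum on the right. By Theorem~\ref{TheoremAssocColength} the total multiplicity $\sum_{\lambda\vdash n} m(A,H,\lambda)$ is bounded by $C_3 n^{r_3}$, so it is enough to bound $\dim M(\lambda)$ for those $\lambda$ that actually occur, i.e.\ those with $m(A,H,\lambda)\neq 0$. By Lemma~\ref{LemmaAssocUpperCochar}, every such $\lambda = (\lambda_1,\ldots,\lambda_s)$ satisfies $\sum_{k=d+1}^{s}\lambda_k \leqslant p-1$; in other words, $\lambda$ lies in the strip obtained from a diagram with at most $d$ rows by adjoining at most $p-1$ extra boxes below row $d$. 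The number of such partitions of $n$ is itself polynomially bounded in $n$ (there are at most $O(n^{p-1})$ choices for the boxes outside the first $d$ rows, times $O(n^{d})$ for the shape of the first $d$ rows), which already subsumes the colength bound, but I will keep using Theorem~\ref{TheoremAssocColength} to be safe.

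The key estimate is then: if $\lambda\vdash n$ has $\sum_{k=d+1}^{s}\lambda_k \leqslant p-1$, then $\dim M(\lambda)\leqslant C n^{r} d^{n}$ for constants $C,r$ depending only on $d$ and $p$. This is a well-known consequence of the hook length formula / the asymptotics of the function $\Phi(\lambda)$; it is exactly the content of~\cite[Lemma~6.2.4]{ZaiGia} (or~\cite[Lemma~6.2.5]{ZaiGia}) applied with the number of "long" rows equal to $d$. Concretely, one writes $\lambda = \mu + \nu$ where $\mu$ is the part of $\lambda$ in the first $d$ rows and $\nu$ accounts for the at most $p-1$ remaining boxes; then $\dim M(\lambda) \leqslant \binom{n}{|\nu|}\dim M(\mu)\dim M(\nu) \cdot(\text{const})$, $\dim M(\nu)$ is bounded by a constant since $|\nu|\leqslant p-1$, $\binom{n}{|\nu|}\leqslant n^{p-1}$, and $\dim M(\mu)\leqslant d^{|\mu|}\leqslant d^{n}$ because a partition with at most $d$ rows has $\dim M(\mu)$ bounded by $d^{|\mu|}$ (this last inequality is the standard bound $\dim M(\mu) \le d^n$ for $\mu$ with at most $d$ parts, see~\cite[Theorem~6.2.5]{ZaiGia} or \cite[\S4.3]{ZaiGia}). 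Multiplying the polynomially many occurring $\lambda$ by this bound and by the colength bound $C_3 n^{r_3}$ gives $c^H_n(A)\leqslant C_2 n^{r_2} d^{n}$ for suitable $C_2>0$, $r_2\in\mathbb R$.

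For the case $d=0$: by definition~(\ref{EqdAssoc}), $d=0$ means that for every choice of indices $i_1,\ldots,i_r$ (in particular every single index $i$, the case $r=1$) we have $H\varkappa(B_i)=0$; since $1\in H$ acts as the identity, this forces $\varkappa(B_i)=0$ for all $i$, hence $A/J = \bigoplus B_i = 0$, i.e.\ $A = J$ is nilpotent. I would spell this out and note that then $c^H_n(A)=0$ for $n\geqslant p$, consistent with the Remark after Theorem~\ref{TheoremMainHAssoc}. The main obstacle I anticipate is purely bookkeeping: making sure the decomposition $\lambda = \mu+\nu$ and the corresponding factorization of $\dim M(\lambda)$ is stated with the right combinatorial constant, and that the polynomial factors from (i) the number of occurring diagrams, (ii) the colength bound, and (iii) the $\binom{n}{|\nu|}$ term are all absorbed into a single $n^{r_2}$; none of these steps is deep, since the genuinely delicate $d^n$-growth is already isolated in the cited hook-length estimate from~\cite{ZaiGia}.
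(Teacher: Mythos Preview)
Your proposal is correct and follows the same route as the paper: combine Lemma~\ref{LemmaAssocUpperCochar} with the hook-length estimates \cite[Lemmas~6.2.4, 6.2.5]{ZaiGia} to bound $\dim M(\lambda)$ (or their sum) for the surviving diagrams by a polynomial times $d^n$, and then multiply by the polynomial colength bound of Theorem~\ref{TheoremAssocColength}. Your write-up is more detailed---you unpack the $\lambda=\mu+\nu$ argument behind the cited lemmas and you spell out the $d=0$ case, which the paper leaves implicit---but the strategy is identical.
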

\begin{proof}
Lemma~\ref{LemmaAssocUpperCochar} and~\cite[Lemmas~6.2.4, 6.2.5]{ZaiGia}
imply
$$
\sum_{m(A,H, \lambda)\ne 0} \dim M(\lambda) \leqslant C_3 n^{r_3} d^n
$$
for some constants $C_3, r_3 > 0$.
Together with Theorem~\ref{TheoremAssocColength} this inequality yields the upper bound.
\end{proof}

\section{Lower bound}\label{SectionLowerAssoc}

As usual, in order to prove the lower bound, it is sufficient to provide
a polynomial alternating on sufficiently many sufficiently large sets of variables.
Lemma~\ref{LemmaAssocLowerPolynomial} below is a generalization of~\cite[Lemma~10]{ASGordienko3}.

\begin{lemma}\label{LemmaAssocLowerPolynomial}
Let $A$, $J$, $\varkappa$, $B_i$, and $d$ be the same as in Theorem~\ref{TheoremMainHAssoc}.
If $d > 0$, then there exists a number $n_0 \in \mathbb N$ such that for every $n\geqslant n_0$
there exist disjoint subsets $X_1$, \ldots, $X_{2k} \subseteq \lbrace x_1, \ldots, x_n
\rbrace$, $k := \left[\frac{n-n_0}{2d}\right]$,
$|X_1| = \ldots = |X_{2k}|=d$ and a polynomial $f \in P^H_n \backslash
\Id^H(A)$ alternating in the variables of each set $X_j$.
\end{lemma}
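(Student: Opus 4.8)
The plan is to build the polynomial $f$ explicitly from the algebraic data witnessing the value of $d$, following the strategy used for $G$-identities in \cite[Lemma~10]{ASGordienko3} but adapted to the generalized $H$-action and the merely $F$-linear section $\varkappa$. First I would unwind the definition~(\ref{EqdAssoc}): fix indices $i_1, \ldots, i_r$ with $\dim(B_{i_1}\oplus\cdots\oplus B_{i_r}) = d$ and elements $h_1,\ldots,h_r \in H$ together with elements $w_1,\ldots,w_{r-1} \in A^+$ such that the product $(h_1\varkappa(b^{(1)}))\,w_1\,(h_2\varkappa(b^{(2)}))\,w_2\cdots w_{r-1}\,(h_r\varkappa(b^{(r)}))$ is nonzero for suitable $b^{(k)}$; moreover, since each $B_{i_k}$ is $H$-simple, one can arrange that the elements $h\varkappa(b)$, as $b$ ranges over $B_{i_k}$ and $h$ over $H$, span all of $\varkappa(B_{i_k})$ modulo $J$, so that one can find, inside each block, $\dim B_{i_k}$ elements whose images are linearly independent. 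Choosing a basis of $\varkappa(B_{i_1})\oplus\cdots\oplus\varkappa(B_{i_r})$ adapted to this and counting radical contributions along the lines of Lemma~\ref{LemmaAssocUpperCochar} gives the number $n_0$, which absorbs the bounded ``overhead'' (the fixed elements $w_j$, the nilpotency index $p$, and the gluing variables).

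Next I would assemble the multilinear $H$-polynomial. For each of the $2k$ alternating sets $X_j = \{y_{j,1},\ldots,y_{j,d}\}$ I would use the standard alternator: take the product over $j$ of an expression of the form $\sum_{\sigma\in S_d}(\sign\sigma)\,(\text{block pattern on }y_{j,\sigma(1)},\ldots,y_{j,\sigma(d)})$, where the block pattern distributes the $d$ variables of $X_j$ across the $r$ blocks $B_{i_1},\ldots,B_{i_r}$ exactly as in the witnessing product, applying the operators $h_k$ to the appropriate variables and inserting the fixed ``connector'' monomials $w_j$ realised as products of additional variables evaluated on the needed elements of $A^+$ (using that $A^+ = A + F\cdot 1$, so a connector may be a scalar, handled by an extra free variable or simply omitted). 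Concatenating the $2k$ alternated blocks, with fixed connectors between consecutive blocks, and padding with $\leqslant n_0$ further variables to reach degree exactly $n$, yields $f \in P^H_n$ alternating in each $X_j$.

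To see $f \notin \Id^H(A)$ I would exhibit an evaluation on basis elements that does not vanish: in each set $X_j$ substitute the $d$ chosen basis elements of $\varkappa(B_{i_1})\oplus\cdots\oplus\varkappa(B_{i_r})$, one per variable, matching the block pattern; then every term of the alternator on $X_j$ except the identity permutation dies (a repeated basis element inside a single block, or an element landing in the wrong block, is killed after applying $\pi$), so the whole product collapses to a scalar multiple of a power of the nonzero element $(h_1\varkappa(b^{(1)}))\,w_1\cdots(h_r\varkappa(b^{(r)}))$; choosing the connectors and the padding variables to be $1 \in A^+$ (or idempotents projecting onto the relevant blocks) keeps this nonzero. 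The main obstacle is the bookkeeping in this last step: because $\varkappa$ is only $F$-linear, $h_k\varkappa(b)$ need not lie in $\varkappa(A/J)$, so one must track these elements modulo $J$ and argue, using $J^p = 0$ and the bound built into $n_0$, that the radical components accumulated across all $2k$ blocks cannot annihilate the product — this is exactly where the constant $n_0$ and the restriction to at most $d$-dimensional block-sums (as in Lemma~\ref{LemmaAssocUpperCochar}) are used, and getting the alternation, the $H$-action, and the radical count to cohere simultaneously is the delicate part.
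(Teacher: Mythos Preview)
Your construction has a structural gap that prevents it from working for unbounded $k$. You place the $2k$ alternated copies of the witnessing product \emph{in series}: one full copy of the pattern $(h_1\varkappa(b^{(1)}))w_1\cdots w_{r-1}(h_r\varkappa(b^{(r)}))$ for each set $X_j$, then connectors between consecutive copies. But passing from the $j$th copy to the $(j+1)$st copy requires an element of $B_{i_r}A^+ B_{i_1}$ to survive; since the $i_k$ are distinct and $A/J=B_1\oplus\cdots\oplus B_q$ is a direct sum of ideals, this forces every connector to contribute a factor from $J$. With $2k-1$ such connectors and $J^p=0$, your product vanishes as soon as $2k-1\geqslant p$, so nothing is left for large $n$. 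The claim that the value is ``a power of the nonzero element'' is exactly where this fails: that element typically has no reason to be non-nilpotent. The same difficulty wrecks your treatment of the non-$H$-linearity of $\varkappa$: the radical corrections $h\varkappa(a)-\varkappa(ha)\in J$ pile up in each of the $2k$ blocks, and no fixed $n_0$ can absorb an accumulation that grows linearly in $k$.

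The paper avoids both problems by a different architecture. It keeps only \emph{one} copy of the witnessing chain $\varkappa(B_{i_1})\cdots\varkappa(B_{i_r})$ and instead inserts, inside each block $B_{i_t}$, a Regev-type $H$-polynomial $f_t$ provided by \cite[Theorem~7]{ASGordienko3}: this $f_t$ already carries $2k$ alternating sets of size $d_t=\dim B_{i_t}$ and, crucially, satisfies $f_t(\text{basis};\ldots;z_t)=z_t$, so after substitution the value of the whole expression is still the original nonzero witness, independently of $k$. Alternating across the union $X_\ell=\bigcup_t X_\ell^{(t)}$ then costs only the harmless scalar $\bigl(\prod_t d_t!\bigr)^{2k}$, because cross-block swaps die by the ideal decomposition of $\varkappa(A/J)$. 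For the $\varkappa$ problem the paper uses a maximality trick: it first saturates the witness with as many $J$-factors $j_i$ as possible, so that \emph{any} additional $J$-insertion (in particular any $h\varkappa(a)-\varkappa(ha)$) kills the product; this forces $\varkappa$ to behave as an $H$-module map inside the expression, without any counting. Your outline is missing both of these ingredients.
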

\begin{proof} 
Without loss of generality,
we may assume that $d = \dim(B_1 \oplus B_2 \oplus \ldots \oplus B_r)$
where 
$(H\varkappa(B_1))A^+ (H\varkappa(B_2))A^+ \ldots (H\varkappa(B_{r-1}))A^+ (H\varkappa(B_r))\ne 0$.

 Since $J$ is nilpotent, we can find maximal $\sum_{i=1}^r q_i$, $q_i \in \mathbb Z_+$,
 such that
$$\left(a_1 \prod_{i=1}^{q_1} j_{1i}\right) {\gamma_1}\varkappa(b_1)
\left(a_2 \prod_{i=1}^{q_2} j_{2i}\right)
 {\gamma_2}\varkappa(b_2) \ldots \left(a_r \prod_{i=1}^{q_r} j_{ri}\right)
 {\gamma_r}\varkappa(b_r)  \left(a_{r+1} \prod_{i=1}^{q_{r+1}} j_{r+1,i}\right) \ne 0$$ for
  some $j_{ik}\in J$, $a_i \in A^+$, $b_i \in B_i$, $\gamma_i \in H$.
 Let $j_i := a_i\prod_{k=1}^{q_i} j_{ik}$.
 
 Then \begin{equation}\label{EqAssocNonZero}j_1 {\gamma_1}\varkappa(b_1)
j_2  {\gamma_2}\varkappa(b_2) \ldots j_r  {\gamma_r}\varkappa(b_r)j_{r+1} \ne 0\end{equation}
 for some $b_i \in B_i$, $\gamma_i \in H,$
 however
 \begin{equation}\label{EqAssocbazero}j_1
  \tilde b_1 j_2 \tilde b_2
  \ldots j_r \tilde b_r j_{r+1} = 0 \end{equation} for all $\tilde b_i\in A^+(H\varkappa(B_i))A^+$  such that $\tilde b_k\in J(H\varkappa(B_k))A^+ + A^+(H\varkappa(B_k))J$ for at least one $k$.
 
 Recall that $\varkappa$ is a homomorphism of algebras.
Moreover $\pi(h\varkappa(a)-\varkappa(ha))=0$ 
implies $h\varkappa(a)-\varkappa(ha) \in J$ for all $a\in A$ and $h\in H$.
Hence, by~(\ref{EqAssocbazero}), if we replace $\varkappa(b_i)$ in 
the left-hand side of~(\ref{EqAssocNonZero})
with a product of $\varkappa(b_i)$ and an expression involving $\varkappa$, the map $\varkappa$
 will behave like a homomorphism of $H$-modules.
 We will exploit this property further.

Let $a^{(i)}_{k}$, $1 \leqslant k \leqslant d_k := \dim B_i$,
 be a basis in $B_i$, $1 \leqslant k \leqslant r$.
 
 In virtue of~\cite[Theorem~7]{ASGordienko3},
there exist constants $m_t \in \mathbb Z_+$
such that for any $k$ there exist
 multilinear polynomials $$f_t=f_t(x^{(t, 1)}_1,
 \ldots, x^{(t, 1)}_{d_t};
 \ldots;  x^{(t, 2k)}_1,
 \ldots, x^{(t, 2k)}_{d_t}; z^{(t)}_1, \ldots, z^{(t)}_{m_t}; z_t) \in P^H_{2k d_t+m_t+1}$$ 
alternating in the variables from disjoint sets
$X^{(t)}_{\ell}=\lbrace x^{(t, \ell)}_1, x^{(t, \ell)}_2,
\ldots, x^{(t, \ell)}_{d_t} \rbrace$, $1 \leqslant \ell \leqslant 2k$
and elements $\bar z^{(t)}_\alpha \in B_{t}$, $1 \leqslant \alpha \leqslant m_t$,
such that
$$f_t(a^{(t)}_1,
 \ldots, a^{(t)}_{d_t};
 \ldots;  a^{(t)}_1,
 \ldots, a^{(t)}_{d_t}; \bar z^{(t)}_1, \ldots, \bar z^{(t)}_{m_t}; \bar z_t)=\bar z_t$$
 for any $\bar z_t \in B_t$. (In~\cite[Theorem~7]{ASGordienko3} the author requires $\dim H < +\infty$, but one can replace
$H$ with its image $\tilde H$ in $\Hom_F(A)$ and notice that $\dim \tilde H < +\infty$ and $c_n^H(A) = c_n^{\tilde H}(A)$ for all $n\in\mathbb N$.)

 Let $n_0 = 2r+1+\sum_{i=1}^r m_i$, $k = \left[\frac{n-n_0}{2d}\right]$, $\tilde k =
 \left[\frac{(n-2kd-n_0)-m_1}{2d_1}\right]+1$. We choose $f_t$ for  $B_t$ and $k$,
 $1 \leqslant t \leqslant r$. In addition, again by~\cite[Theorem~7]{ASGordienko3},
  we take $\tilde f_1$ for $B_1$ and $\tilde k$.
  Let $$\hat f_0 := v_1 \gamma_1\varkappa \left(f_1\left(x^{(1, 1)}_1,
 \ldots, x^{(1, 1)}_{d_1};
 \ldots;  x^{(1, 2k)}_1,
 \ldots, x^{(1, 2k)}_{d_1}; z^{(1)}_1, \ldots, z^{(1)}_{m_1}; \right.\right.$$ $$\left.\left.
 \tilde f_1(y^{(1)}_1,
 \ldots, y^{(1)}_{d_1};
 \ldots;  y^{(2\tilde k)}_1,
 \ldots, y^{(2\tilde k)}_{d_1}; u_1, \ldots, u_{m_1}; z_1)\right)\right) v_2\cdot $$ $$
 \prod_{t=2}^{r}\left(\gamma_t\varkappa\left(  f_t(x^{(t, 1)}_1,
 \ldots, x^{(t, 1)}_{d_t};
 \ldots;  x^{(t, 2k)}_1,
 \ldots, x^{(t, 2k)}_{d_t}; z^{(t)}_1, \ldots, z^{(t)}_{m_t}; z_t)\right) v_{t+1}\right).$$
 
 The value of the multilinear function $\hat f_0$ under the substitution $x^{(t, \alpha)}_{\beta}=a^{(t)}_\beta$,
 $z^{(t)}_{\beta}=\bar z^{(t)}_\beta$, $v_t=j_t$, $z_t=b_t$,
  $y^{(\alpha)}_{\beta}=a^{(1)}_\beta$, $u_{\beta}=\bar z^{(1)}_\beta$
  equals the left-hand side of~(\ref{EqAssocNonZero}), which is nonzero.

    Note that $f_i$ are multilinear in $z_i$ and $\tilde f_1$ is multilinear in $z_1$.
    Therefore we may assume that for a fixed $i$ in all entries of $z_i^h$ in $f_i$
    the element $h\in H$ is the same. Analogously, we may assume that in all entries of $z_1^h$ in $
    \tilde f_1$ the element $h\in H$ is the same. Furthermore, we can hide these $h$ inside the
    elements substituted for $z_i$  in the case of $\tilde f_1$ and $f_i$, $i \geqslant 2$, and inside $\tilde f_1$ in the case of $f_1$, and the value $b$ of $\hat f_0$ is still nonzero under
    the substitution
    $x^{(t, \alpha)}_{\beta}=a^{(t)}_\beta$,
 $z^{(t)}_{\beta}=\bar z^{(t)}_\beta$, $v_t=j_t$, $z_t=h_t b_t$,
  $y^{(\alpha)}_{\beta}=a^{(1)}_\beta$, $u_{\beta}=\bar z^{(1)}_\beta$
  for some $h_t \in H$.
  
  As we have mentioned, $\varkappa$ is a homomorphism of algebras and, by~(\ref{EqAssocbazero}), behaves like a homomorphism of $H$-modules. Note that do not need  the last property for $z_i^h$
  since we may assume that in all such entries we have $h=1$.
  Hence the value of 
$$f_0 := v_1  \left(f_1\left(x^{(1, 1)}_1,
 \ldots, x^{(1, 1)}_{d_1};
 \ldots;  x^{(1, 2k)}_1,
 \ldots, x^{(1, 2k)}_{d_1}; z^{(1)}_1, \ldots, z^{(1)}_{m_1}; \right.\right.$$ $$\left.\left.
 \tilde f_1(y^{(1)}_1,
 \ldots, y^{(1)}_{d_1};
 \ldots;  y^{(2\tilde k)}_1,
 \ldots, y^{(2\tilde k)}_{d_1}; u_1, \ldots, u_{m_1}; z_1)\right)\right)^{\gamma_1} v_2\cdot $$ $$
 \prod_{t=2}^{r}\left(\left(  f_t(x^{(t, 1)}_1,
 \ldots, x^{(t, 1)}_{d_t};
 \ldots;  x^{(t, 2k)}_1,
 \ldots, x^{(t, 2k)}_{d_t}; z^{(t)}_1, \ldots, z^{(t)}_{m_t}; z_t)\right)^{\gamma_t} v_{t+1}\right)$$
 under the substitution
    $x^{(t, \alpha)}_{\beta}=\varkappa(a^{(t)}_\beta)$,
 $z^{(t)}_{\beta}=\varkappa(\bar z^{(t)}_\beta)$, $v_t=j_t$, $z_t=\varkappa(h_t b_t)$,
  $y^{(\alpha)}_{\beta}=\varkappa(a^{(1)}_\beta)$, $u_{\beta}=\varkappa(\bar z^{(1)}_\beta)$
  is again $b\ne 0$. 
    We denote this substitution by $\Xi$.

 Note that without additional manipulations a composition
 of $H$-polynomials is only a multilinear function but not an $H$-polynomial.
 However, using~(\ref{EqGeneralizedHopf}), we can always represent
 such function by an $H$-polynomial. Here we make such manipulations at the very end of the proof.
 
Let $X_\ell = \bigcup_{t=1}^r X^{(t)}_\ell$ and let $\Alt_\ell$
be the operator of alternation on the set $X_\ell$.
   Denote $\hat f := \Alt_1 \Alt_2 \ldots \Alt_{2k} f_0$.
   Note that the alternations do not change $z_t$,
   and $f_t$ is alternating on each $X^{(t)}_\ell$.
   Hence the value of $\hat f$ under the substitution $\Xi$
   equals $\left((d_1)! (d_2)! \ldots (d_{r})!\right)^{2k}\ b \ne 0$
   since $\varkappa(B_1) \oplus \ldots \oplus \varkappa(B_r)$ is a direct sum
   of (not necessarily $H$-invariant) ideals and if the alternation puts a variable from
   $X^{(t)}_\ell$ on the place of a variable from $X^{(t')}_\ell$
   for $t \ne t'$, the corresponding $h\varkappa(a^{(t)}_\beta)$, $h\in H$, annihilates $\varkappa(h_{t'}b_{t'})$.

   Note that $\tilde f_1$ is a linear combination
   of multilinear monomials $W$, and one of the terms
   $$\Alt_1 \Alt_2 \ldots \Alt_{2k} v_1 \left(f_1(x^{(1, 1)}_1,
 \ldots, x^{(1, 1)}_{d_1};
 \ldots;  x^{(1, 2k)}_1,
 \ldots, x^{(1, 2k)}_{d_1}; z^{(1)}_1, \ldots, z^{(1)}_{m_1}; W)\right)^{\gamma_1}v_2\cdot $$ $$
 \prod_{q=2}^{r}\left(\left( 
 f_t(x^{(t, 1)}_1,
 \ldots, x^{(t, 1)}_{d_t};
 \ldots;  x^{(t, 2k)}_1,
 \ldots, x^{(t, 2k)}_{d_t}; z^{(t)}_1, \ldots, z^{(t)}_{m_t}; z_t)\right)^{\gamma_q}v_{q+1}\right)$$
 in $\hat f$ does not vanish under the substitution $\Xi$.
 Moreover, $$\deg \hat f = 2kd + (2\tilde k d_1+m_1) + n_0 > n$$ and $\deg W = \deg \tilde f_1
 = 2\tilde k d_1+m_1+1$.
 Let $W=w_1 w_2 \ldots w_{2\tilde k d_1+m_1+1}$ where $w_i$ are  variables
 from the set $\lbrace y^{(1)}_1,
 \ldots, y^{(1)}_{d_1};
 \ldots;  y^{(2\tilde k)}_1,
 \ldots, y^{(2\tilde k)}_{d_1}; u_1, \ldots, u_{m_1}; z_1\rbrace$
 replaced under the substitution $\Xi$ with $\bar w_i \in \varkappa(B_1)$.
 Let $$f:=\Alt_1 \Alt_2 \ldots \Alt_{2k}  v_1 \left(f_1(x^{(1, 1)}_1,
 \ldots, x^{(1, 1)}_{d_1};
 \ldots;  x^{(1, 2k)}_1,
 \ldots, x^{(1, 2k)}_{d_1}; z^{(1)}_1, \ldots, z^{(1)}_{m_1}; \right.$$ $$\left.
 w_1 w_2 \ldots w_{n-2kd-n_0} z)\right)^{\gamma_1}v_2 \cdot $$ $$
 \prod_{q=2}^{r}\left(\left( 
 f_t(x^{(t, 1)}_1,
 \ldots, x^{(t, 1)}_{d_1};
 \ldots;  x^{(t, 2k)}_1,
 \ldots, x^{(t, 2k)}_{d_1}; z^{(t)}_1, \ldots, z^{(t)}_{m_t}; z_t)\right)^{\gamma_q}v_{q+1}\right)$$
where $z$ is an additional variable.
Then using~(\ref{EqGeneralizedHopf}), we may assume $f \in P_n^H$.
Note that $f$ is alternating in $X_\ell$, $1 \leqslant \ell \leqslant 2k$,
and does not vanish under the substitution $\Xi$
with $z = \bar w_{n-2kd-n_0+1} \ldots \bar w_{2\tilde k d_1+m_1+1}$.
 Thus $f$ satisfies all the conditions of the lemma.
 \end{proof}
\begin{proof}[Proof of Theorem~\ref{TheoremMainHAssoc}]
 Now we repeat verbatim the proofs of~\cite[Lemma~11 and Theorem~5]{ASGordienko3}
 using Lemmas~\ref{LemmaAssocUpperCochar} and~\ref{LemmaAssocLowerPolynomial} instead of~\cite[Lemma~10 and Theorem~6]{ASGordienko3}. 
\end{proof}

\section*{Acknowledgements}

This work started while I was an AARMS postdoctoral fellow at Memorial University of Newfoundland, whose faculty and staff I would like to thank for hospitality. I am grateful to Yuri Bahturin, who suggested that I study polynomial $H$-identities, and to Mikhail Zaicev, who suggested that I consider algebras without
an $H$-invariant Wedderburn~--- Mal'cev decomposition.

\end{document}